\newtheorem{definition}{Definition}[section]%
\newtheorem{theorem}[definition]{Theorem}%
\newtheorem{proposition}[definition]{Proposition}%
\newtheorem{lemma}[definition]{Lemma}%
\newtheorem{assumption}[definition]{Assumption}%
\newtheorem{corollary}[definition]{Corollary}%
{\theorembodyfont{\rmfamily} \newtheorem{remark}[definition]{Remark}}%
{\theorembodyfont{\rmfamily} \newtheorem{example}[definition]{Example}}%
\newcommand{\im}{\mathrm{im}}%
\newcommand{\id}{\mathrm{id}}%
\newcommand{\Gl}{\mathrm{Gl}}%
\newcommand{\tm}{\times}%
\newcommand{\unit}{\mathds{1}}%
\newcommand{\esssup}{\operatorname*{ess\;sup}}%
\newcommand{\essinf}{\operatorname*{ess\;inf}}%
\newcommand{\inv}{\mathrm{inv}}%
\newcommand{\rmd}{\mathrm{d}}%
\newcommand{\rmD}{\mathrm{D}}%
\newcommand{\N}{\mathbb{N}}%
\newcommand{\Z}{\mathbb{Z}}%
\newcommand{\R}{\mathbb{R}}%
\renewcommand{\P}{\mathbb{P}}%
\newcommand{\BC}{\mathcal{B}}%
\newcommand{\FC}{\mathcal{F}}%
\newcommand{\LC}{\mathcal{L}}%
\newcommand{\MC}{\mathcal{M}}%
\newcommand{\SC}{\mathcal{S}}%
\newcommand{\UC}{\mathcal{U}}%
\newcommand{\VC}{\mathcal{V}}%
\newcommand{\WC}{\mathcal{W}}%
\begin{document}

\title{Invariance Properties of Controlled Stochastic Nonlinear Systems under Information Constraints\footnote{This research was supported in part by the Natural Sciences and Engineering Research Council (NSERC) of Canada. A brief presentation of some of the results in this paper will appear at the 2019 Information Theory Workshop, Visby, Sweden.}}

\author{Christoph Kawan and Serdar Y\"{u}ksel\thanks{C.~Kawan is with the Institute for Informatics at the Ludwig-Maximilians-Universit\"{a}t Munich, 80538 Munich, Germany (email: christoph.kawan@lmu.de). S.~Y\"uksel is with the Department of Mathematics and Statistics, Queen's University, Kingston, Ontario, Canada, K7L 3N6 (e-mail: yuksel@mast.queensu.ca). This research was supported in part by the Natural Sciences and Engineering Research Council (NSERC) of Canada. Some results of this paper were presented without proofs at the 2019 IEEE Information Theory Workshop.}}%

\maketitle

\begin{abstract}
Given a stochastic nonlinear system controlled over a possibly noisy communication channel, the paper studies the largest class of channels for which there exist coding and control policies so that the closed-loop system is stochastically stable. The stability criterion considered is asymptotic mean stationarity (AMS). We develop a general method based on ergodic theory and probability to derive fundamental bounds on information transmission requirements leading to stabilization. Through this method we develop a new notion of entropy which is tailored to derive lower bounds for asymptotic mean stationarity for both noise-free and noisy channels. The bounds obtained through probabilistic and ergodic-theoretic analysis are more refined in comparison with the bounds obtained earlier via information-theoretic methods. Moreover, our approach is more versatile in view of the models considered and allows for finer lower bounds when the AMS measure is known to admit further properties such as moment bounds.
\end{abstract}

\begin{IEEEkeywords}
Stochastic stabilization; asymptotic mean stationarity; measure-theoretic entropy; information theory%
\end{IEEEkeywords}

\section{Introduction}

Consider the following problem: Given a stochastic nonlinear system controlled over a communication channel, what is the largest class of such channels so that there exist coding and control policies leading to (some form of) stochastic stability? Various versions of this problem have been studied extensively for (possibly stochastic) linear systems and {\it deterministic} nonlinear systems. 

For deterministic nonlinear systems, invariance entropy \cite{CKa} measures the smallest average data rate of a noiseless channel above which a compact subset $Q$ of the state space can be made invariant by a controller receiving its state information through this channel. The essence of the idea behind this concept is as follows: If the controller has $n$ bits of information available, it can distinguish at most $2^n$ different states, hence generate at most $2^n$ different control inputs. Consequently, the number of control inputs needed to achieve the control objective (on a finite time interval) is a measure for the necessary information. The definition of invariance entropy thus reads%
\begin{equation*}
  h_{\inv}(Q) := \lim_{\tau \rightarrow \infty}\frac{1}{\tau}\log r_{\inv}(\tau,Q),%
\end{equation*}
where $r_{\inv}(\tau,Q)$ is the minimal number of control inputs needed to achieve invariance of $Q$ on the time interval $[0,\tau]$ for arbitrary initial states in $Q$. It is relatively immediate to observe that the growth rate of $r_{\inv}(\tau,Q)$ is directly related to the rate of volume expansion for subsets of $Q$ under the evolution of the system. Indeed, the faster volume is expanded, the more coding regions, and hence different control inputs, are necessary to keep the whole volume inside $Q$. Since for every reasonable stabilization objective it is necessary to keep certain volumes bounded (or even shrink them to zero), the same ideas as used in the definition of invariance entropy should work universally for stabilization over discrete channels. This intuition was rigorously verified in a number of publications, including \cite{Col,CKa,CKN,DKa,Kaw,KDe}.%

In this paper, we demonstrate that such an approach is also applicable, by means of the machinery we develop, to stochastic systems, stochastic channels, and to stochastic stability. Our criterion for stochastic stability is asymptotic mean stationarity (AMS), introduced by Gray \& Kieffer \cite{GrayKieffer} and used in networked control in a number of publications \cite{YukselBasarBook,YukselAMSITArxiv,yuksel2016stability}. This concept considerably weakens the notion of stationarity and is closely related to other criteria used in the literature, such as \emph{stability in probability} \cite{MatveevSavkin}, \emph{(unique) ergodicity} \cite{YukselBasarBook}, as well as another commonly used stability criterion: \emph{finite $m$-th moment stability} for various $m \in \N$ \cite{Minero,NairEvans,SahaiParts}. The AMS property is weaker than unique ergodicity, and the finite-moment stability typically implies the AMS property provided additional regularity properties are imposed. Nonetheless, the AMS property is a very versatile notion; if one assumes that the support of the asymptotic mean measure is compact, the AMS property can be related to set stability; if one assumes that this measure has a finite $m$-th moment for its coordinate state process, the AMS property would lead to the finite-moment stability property, and finally the ergodicity can also be imposed for certain applications through mixing properties, e.g., through the construction of a positive Harris recurrent Markov chain \cite{YukMeynTAC2010}. Barron \cite{Barron} and Gray \& Kieffer \cite{GrayKieffer} note various other operational utilities of the AMS property.%

As an auxiliary quantity to derive lower bounds on the necessary channel capacity for generating an AMS state process, we introduce a new concept of stabilization entropy inspired by both invariance entropy and measure-theoretic entropy of dynamical systems, in particular by a characterization of the latter due to Katok \cite{Kat} and a generalization thereof developed in Ren et al.~\cite{Re2}. Roughly speaking, stabilization entropy looks at the exponential growth rate of the number of length-$n$ control sequences necessary to keep the state inside some set for a certain fraction of the number $n$ of times with a certain positive probability. The corresponding set, the frequency of times and the probability are parameters that can be adjusted, and the relation to channel capacity can only be established for certain choices of these parameters.%

Stochastic stabilization of nonlinear systems driven by noise (especially unbounded noise) over communication channels has been studied in few publications, notably in \cite{yuksel2016stability}. With our method we are able to refine the bounds presented in \cite{yuksel2016stability}. The approach developed in our paper, unlike the differential-entropic methods in \cite{yuksel2016stability} and other publications, allows for%
\begin{itemize}
\item[(i)] refined stochastic stability results applicable to a more general class of system models (Theorems \ref{thm_volexp} and \ref{thm_ibs}). and more refined stability criteria such as the AMS property in combination with moment conditions (see Corollary \ref{cor_momentcond}),%
\item[(ii)] a more concise and direct derivation, building on volume growth arguments, applicable to a plethora of criteria,%
\item[(iii)] more refined bounds for a large class of systems through trading-off growth rates with the measures of sets under the coordinate projection of a stationary measure (see Theorem \ref{thm_volexp}),%
\item[(iv)] the unification of the theory developed for deterministic systems controlled over noise-free communication channels with their stochastic counterparts, involving both stochastic nonlinear dynamical systems and noisy communication channels (see Theorem \ref{thm_noisychannel}).%
\end{itemize}

In the paper at hand, explicit lower bounds on the capacity in terms of characteristics of the system are derived for nonlinear volume-expanding systems with additive control and noise, and for a class of inhomogeneous semi-linear systems with nonlinear dependence on the control variable. For the first class of systems, we obtain a particularly interesting result which displays a trade-off between the volume-expansion rate of the system and the mass distribution of the probability measure coming from the AMS property. This trade-off is a specific feature of nonlinear systems, since in the linear case the influence of the measure is canceled out due to the fact that the Jacobian matrix with respect to the state is a constant in this case. From our results we can easily recover the well-known capacity bound for linear systems, $\sum_{\lambda}\max\{0,\log|\lambda|\}$ (summing over all eigenvalues of the dynamical matrix), and also previous bounds for nonlinear systems proved via information-theoretic methods.%

We emphasize that for the case of noisy channels, at least for a simple class of scalar systems, we are able to derive similar lower bounds as for noiseless channels via relating the number of control sequences needed for stabilization to a state estimation problem, and then by a generalization of the strong converse to the channel coding problem in information theory together with optimal transport theory, relating the channel capacity to a state estimation problem. This approach, in particular, allows for replacing arguments which depend on the maximum number of possible distinct message sequences for noiseless channels with an entropy-theoretic argument. It is our hope that this novel method will also be accessible to a general readership and find further applications.%

The paper is organized as follows. In Section \ref{sec_litreview} we provide a short literature review. The technical details of the stabilization problem are outlined in Section \ref{sec_problem}. The subsequent Section \ref{sec_stabent} introduces the notion of stabilization entropy. Applications to specific system models are given in Sections \ref{sec_specmodels1} and \ref{sec_specmodels2}, and Section \ref{sec_noisychannels} contains our result for noisy channels. Finally, the proofs of two technical lemmas are given in the Appendix.%

\section{A brief literature review}\label{sec_litreview}

This paper continues along the research programs developed in \cite{Kaw}, which considers deterministic systems, and \cite{yuksel2016stability}, which considers stochastic systems. For comprehensive literature reviews on the subject, we refer to \cite{Kaw,MatveevSavkin,YukselBasarBook}. Here we only provide a short review of the most relevant contributions.%

For noise-free linear systems controlled over discrete noiseless channels, various authors have obtained a formula for the smallest channel capacity above which stabilization is possible, under various assumptions on the system and the admissible coders and controllers. This result is usually referred to as a \emph{data-rate theorem} and asserts that the smallest capacity is given by the logarithm of the unstable determinant of the open-loop system, i.e., the log-sum of the unstable eigenvalues. The earliest works in this context are Wong \& Brockett \cite{Brockett} and Baillieul \cite{Baillieuil99}. More general versions of the data-rate theorem have been proven in Tatikonda \& Mitter \cite{Tatikonda} and Hespanha et al.~\cite{HOV}. For noisy systems and mean-square stabilization, or more generally, moment-stabilization, analogous data-rate theorems have been proven in Nair \& Evans \cite{NairEvans} and Sahai \& Mitter \cite{SahaiParts}, see also \cite{Martins,SavkinSIAM09}. For extensive reviews, see \cite{AMF,FMi,MatveevSavkin,Ne2,YukselBasarBook}. A data-rate theorem for AMS stability of linear systems was established in \cite[Thm.~8.5.3]{YukselBasarBook} (see also \cite[Thm.~3.1]{yuksel2016stability}) and \cite[Thm.~4.1 and 4.2]{YukselAMSITArxiv}, \cite[Thm.~2.2, 3.2 and 3.5]{AndrewJohnstonReport} under various variations. A recent study along a similar construction to the one introduced in \cite{yukselfixedrate} and \cite{YukMeynTAC2010} under fixed-rate quantization is \cite{KostinaGireejaCDC18}.%

The studies of nonlinear systems have typically considered deterministic systems that are noise-free systems controlled over discrete noiseless channels. In this context, Nair et al.~\cite{Nea} introduced the notion of \emph{topological feedback entropy} (in analogy to topological entropy for dynamical systems \cite{AKM}) for discrete-time systems to characterize the smallest average rate of information above which the state can be kept inside a compact controlled invariant set. They also characterized the smallest data rate for stabilization to an equilibrium point as the log-sum of the unstable eigenvalues of the linearization. Colonius \& Kawan in \cite{CKa} introduced the notion of \emph{invariance entropy} for continuous-time systems for the same stabilization objective. When adapted to the same (discrete-time) setting, the two notions are equivalent, see \cite{CKN}. A comprehensive review of these concepts is provided in \cite{Kaw}. We also note that recently a concept of metric invariance entropy based on conditionally invariant measures was established in \cite{Co2}. Further studies on control of nonlinear systems over communication channels have focused on constructive schemes (and not on converse theorems), primarily for noise-free systems and channels, see, e.g., \cite{baillieul2004data,de2004stabilizability,liberzon2005stabilization}.%

We also emphasize that for nonlinear systems the problems of local stabilization (stabilization to a point), semi-global stabilization (set invariance) and global stabilization (as in the stochastic stabilization criterion considered here) are fundamentally different from each other, while for linear systems they can all be handled with similar methods, leading to the above-mentioned data-rate theorem in each case. This is related to the fact that for linear systems any local (dynamical or control-theoretic) property is a global property as well. For nonlinear systems, linearization techniques work well for local problems, for semi-global problems only under specific assumptions and for global problems almost not at all. In addition, the presence of (possibly unbounded and additive) noise requires an approach fundamentally different from the machinery utilized for local stabilization problems.%

\section{Preliminaries and problem description}\label{sec_problem}

\paragraph{Notation} If $A$ is a finite set, we write $\# A$ for its cardinality. The complement of a set $A \subset X$ is denoted by $A^c = X \backslash A$. We write $\unit_A$ for the indicator function of a set $A$. By $\log$ we always denote the base-$2$-logarithm. We write $\Z_+$ for the set of nonnegative integers and put $\Z_{>0} := \Z_+ \backslash \{0\}$. Moreover, we use the notation $[a;b]$ for a discrete interval, i.e., $[a;b] = \{a,a+1,\ldots,b\}$ for any $a,b \in \Z$ with $a \leq b$. By $|\cdot|$ we denote the standard Euclidean norm on $\R^N$ and by $\|\cdot\|$ any associated operator norm. We write $B_r(x) = \{ y \in \R^N : |x - y| < r \}$ for $x\in\R^N$, $r > 0$, and denote by $\overline{A}$ the closure of a set $A \subset \R^N$. The Lebesgue measure on $\R^N$ is denoted by $m$. We write $I$ for the $N \tm N$-identity matrix and $\Gl(N,\R)$ for the general linear group of $\R^N$. By $\LC(V,W)$ we denote the space of all linear maps between vector spaces $V,W$. We use the notation $\mathrm{supp}(\mu)$ for the support of a Borel probability measure $\mu$. The expectation of a random variable $X$ is denoted by $E[X]$. The entropy of a $\{0,1\}$-valued Bernoulli random variable $X$ with $P(X=0)=r$ is denoted by $H(r)$, i.e., $H(r) = -r\log r - (1-r)\log (1-r)$. The relative entropy of two probability mass functions $p(x)$ and $q(x)$ on a discrete space $\mathbb{X}$ is defined by $D(p||q) := \sum_{x\in\mathbb{X}}p(x)\log\frac{p(x)}{q(x)}$. We refer the reader to \cite{Cover} for further information-theoretic concepts such as mutual information and channel capacity.%

If $\mu,\nu$ are two measures on the same measurable space, we write $\mu \ll_b \nu$ to denote that $\mu$ is absolutely continuous with respect to $\nu$ and its density is essentially bounded.%

If $X^{\Z_+}$ is the set of all sequences in some set $X$, we write $\bar{x} = (x_t)_{t\in\Z_+}$ for elements of $X^{\Z_+}$ and $\theta:X^{\Z_+} \rightarrow X^{\Z_+}$ for the left shift operator, i.e.,%
\begin{equation*}
  (\theta\bar{x})_t = x_{t+1} \mbox{\quad for all\ } t\in \Z_+,\ \bar{x} \in X^{\Z_+}.%
\end{equation*}
Moreover, we write $\bar{x}_{[0,t]} = (x_0,x_1,\ldots,x_t)$ for $t \in \Z_+$ and $\BC(X)$ for the Borel $\sigma$-field of a topological space $X$.%

To avoid technical problems concerning the measurability of certain sets, we make the following general assumption.%

\begin{assumption}\label{ass_stdborel}
We assume that all measurable spaces in this paper are standard Borel and all random variables associated with a given control system are modeled on a common (standard Borel) probability space $(\Omega,\FC,P)$. 
\end{assumption}

The standard Borel space assumption leads to useful universal measurability properties which are utilized in the paper. A measurable image of a Borel set is called an \emph{analytic} set \cite[App.~2]{dynkin1979controlled}. We note that this is evidently equivalent to the seemingly more restrictive condition of being a \emph{continuous} image of a Borel set. The following property will be utilized in our analysis: The image of a Borel set under a measurable map, and hence an analytic set, is universally measurable \cite{dynkin1979controlled}.%

Throughout the paper, we consider a stochastic control system%
\begin{equation}\label{eq_stochsys}
  x_{t+1} = f(x_t,u_t,w_t),\quad t = 0,1,2,\ldots%
\end{equation}
This defines a measurable map $ f:\R^N \tm U \tm W \rightarrow \R^N$, %
where $\R^N$ is endowed with the Borel $\sigma$-field $\BC(\R^N)$, $(U,\FC_U)$ is a measurable space and $(W,\FC_W,\nu)$ a probability space. The noise is modeled by an i.i.d.~sequence $(w_t)_{t\in\Z_+}$ of random variables on $(W,\FC_W)$ with associated probability measure $\nu$. The initial state $x_0$ is modeled by another random variable with probability measure $\pi_0$ on $(\R^N,\BC(\R^N))$ and is assumed to be independent of $(w_t)_{t\in\Z_+}$.%

We write $\varphi(t,x_0,\bar{u},\bar{w})$, $t\in\Z_+$, for the unique trajectory with initial value $x_0 \in \R^N$ associated with the noise realization $\bar{w} \in W^{\Z_+}$ and the control sequence $\bar{u} \in U^{\Z_+}$.%

We assume that an encoder, knowing the states $x_0,x_1,\ldots,x_t$ at time $t\in\Z_+$, transmits at time $t \in \Z_+$ a symbol $q_t$ through a noiseless discrete channel to a decoder/controller. We assume that the decoder receives the signals without delay. The finite coding alphabet is denoted by $\MC$ and the capacity of the channel is%
\begin{equation*}
  C = \log\#\MC.%
\end{equation*}
Thus, at time $t$, the controller has the symbol string $q_{[0,t]} = (q_0,q_1,\ldots,q_t) \in \MC^{t+1}$ available to generate the control input $u_t$. Any coding and control policy of this form is called a \emph{causal coding and control policy}. A more general setup including a noisy channel will be introduced and studied in Section \ref{sec_noisychannels}.%

The considered control objective is to make the state process $(x_t)_{t\in\Z_+}$ asymptotically mean stationary (AMS). Writing
${\mathbf P}$ for the process measure on $(\R^N)^{\Z_+}$, i.e.,%
\begin{equation*}
  {\mathbf P}(F) = P(\{\omega\in\Omega : (x_t(\omega))_{t\in\Z_+} \in F\}),%
\end{equation*}
the process $\{x_t\}_{t\in\Z_+}$ is AMS if there is a probability measure $\bar{P}$ on $\BC((\R^N)^{\Z_+})$ with%
\begin{equation*}
  \lim_{T\rightarrow\infty}\frac{1}{T}\sum_{t=0}^{T-1} {\mathbf P}(\theta^{-t}F) = \bar{P}(F) \mbox{\quad for all\ } F \in \BC((\R^N)^{\Z_+}).%
\end{equation*}
This implies that $\bar{P}$ is a stationary measure for $(x_t)$, i.e., $\bar{P}(\theta^{-t}F) = \bar{P}(F)$ for all times $t$ and Borel sets $F$.%

The AMS property implies the existence of a probability measure $Q$ on $(\R^N,\BC(\R^N))$ so that%
\begin{equation}\label{eq_ams}
  \lim_{T \rightarrow \infty}\frac{1}{T}\sum_{t=0}^{T-1}P(x_t \in A) = Q(A)%
\end{equation}
for every $A \in \BC(\R^N)$. This can be seen by considering sets of the form $F = A \tm \R^N \tm \R^N \tm \cdots$. Then ${\mathbf P}(\theta^{-t}F)$ reduces to $P(x_t \in A)$ and the measure $Q$ is given by $Q(A) = \bar{P}(F)$.%

We note that it was shown in \cite[Thm.~5.1]{yuksel2016stability} that an additive noise system can be made AMS over a finite-capacity channel under mild assumptions. Thus, searching for lower bounds on the necessary channel capacity is a meaningful problem.%

\section{Stabilization entropy}\label{sec_stabent}

%
\begin{definition}
For any Borel set $B \subset \R^N$, $T\in\Z_{>0}$ and $\rho,r\in(0,1)$, a set $S \subset U^T$ is called \emph{$(T,B,\rho,r)$-spanning} if there exists a set $\tilde{\Omega} \in \FC$ with $P(\tilde{\Omega}) \geq 1 - \rho$ so that for every $\omega \in \tilde{\Omega}$ there is $\bar{u} \in S$ with%
\begin{equation}\label{eq_stabent}
  \frac{1}{T} \# \left\{ t\in[0;T-1] : \varphi(t,x_0(\omega),\bar{u},\bar{w}(\omega)) \in B \right\} \geq 1 - r.%
\end{equation}
We write $s_B(T,\rho,r)$ for the smallest cardinality of a $(T,B,\rho,r)$-spanning set (where $s_B(T,\rho,r)=\infty$ if no finite $(T,B,\rho,r)$-spanning set exists) and define the \emph{$(B,\rho,r)$-stabilization entropy of system \eqref{eq_stochsys} by}%
\begin{equation*}
  h_B(\rho,r) := \limsup_{T\rightarrow\infty}\frac{1}{T}\log s_B(T,\rho,r).%
\end{equation*}
\end{definition}

Some remarks about this definition are in order:%

(i) The control sequences $\bar{u}$ in the above definition are not generated by a coding and control policy. Indeed, $h_B(\rho,r)$ is an intrinsic quantity of the open-loop system.%

(ii) The existence and finiteness of $(T,B,\rho,r)$-spanning sets is not immediately clear from the definition. However, as we will see below, in relevant cases this is guaranteed. In general, we always have $0 \leq h_B(\rho,r) \leq \infty$.%

(iii) There are some obvious monotonicity properties of the function $h_B(\cdot,\cdot)$. Namely, if $r$ or $\rho$ become smaller, $h_B(\rho,r)$ increases. This in particular implies the existence of corresponding limits as $r \rightarrow 0$ and $\rho \rightarrow 0$ (which may be infinite).%

(iv) The notion of $(B,\rho,r)$-stabilization entropy is defined in close analogy to the notion of measure-theoretic $r$-entropy \cite{Re1,Re2} for dynamical systems. This quantity generalizes the classical Kolmogorov-Sinai measure-theoretic entropy, on the basis of its characterization due to Katok \cite{Kat} for ergodic measures. While the original definition of measure-theoretic entropy is based on computing the Shannon entropy of ``dynamical partitions'', Katok's characterization is based on counting the minimal number of ``dynamical balls'' of a certain radius needed to cover a subset of the state space with measure greater than some threshold.%

We now present our key lemma which relates the channel capacity necessary for stabilization to the stabilization entropy. In particular, it shows that finite $(T,B,\rho,r)$-spanning sets exist for appropriate choices of $B,\rho,r$, provided that the AMS property can be achieved.%

\begin{lemma}\label{lem_AMS}
Assume that the AMS property is achieved via a causal coding and control policy over a noiseless channel of capacity $C$. Then for every Borel set $B \subset \R^N$ with $0 < Q(B) < 1$ and all sufficiently small $\varepsilon>0$ we have%
\begin{equation*}
  C \geq h_B\left(\frac{1 + \frac{\varepsilon}{2}}{1 + \varepsilon},(1 + \varepsilon)Q(B^c)\right).%
\end{equation*}
If $Q(B) = 1$, then for all $r\in(0,1)$ and $\varepsilon>0$ sufficiently small we have%
\begin{equation*}
  C \geq h_B\left(\frac{1 + \frac{\varepsilon}{2}}{1 + \varepsilon},(1 + \varepsilon)r\right).%
\end{equation*}
\end{lemma}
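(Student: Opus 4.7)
The plan is to demonstrate that the entire collection $S_T$ of length-$T$ control sequences which the causal coding-and-control policy can possibly produce already forms a $(T,B,\rho,r)$-spanning set once $T$ is sufficiently large; this yields $s_B(T,\rho,r) \leq \#S_T$ and, upon taking $\limsup_{T\to\infty}\tfrac{1}{T}\log(\cdot)$, the desired bound on $C$. The counting step is immediate: since $u_t$ is a deterministic function of $q_{[0,t]} \in \MC^{t+1}$, the whole sequence $(u_0,\ldots,u_{T-1})$ is a deterministic function of $q_{[0,T-1]} \in \MC^{T}$, so $\#S_T \leq \#\MC^{T} = 2^{CT}$.

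To exhibit the spanning property I would introduce the counter $N_T(\omega) := \#\{t \in [0;T-1] : x_t(\omega) \in B^c\}$. By the AMS property \eqref{eq_ams}, $E[N_T]/T \to Q(B^c)$, so for all sufficiently large $T$ we have $E[N_T] \leq (1+\varepsilon/2)Q(B^c)T$ (assuming for now $Q(B^c) > 0$). Markov's inequality then gives
\begin{equation*}
  P(N_T \geq (1+\varepsilon)Q(B^c)T) \leq \frac{E[N_T]}{(1+\varepsilon)Q(B^c)T} \leq \frac{1+\varepsilon/2}{1+\varepsilon}.
\end{equation*}
Setting $\tilde{\Omega}_T := \{\omega : N_T(\omega) < (1+\varepsilon)Q(B^c)T\}$ -- which lies in $\FC$ because $N_T$ is a finite sum of indicators of measurable events -- we have $P(\tilde{\Omega}_T) \geq 1 - (1+\varepsilon/2)/(1+\varepsilon)$. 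For each $\omega \in \tilde{\Omega}_T$, the realized control sequence $\bar{u}(\omega) \in S_T$ produces, together with the noise realization $\bar{w}(\omega)$, precisely the closed-loop trajectory $x_t(\omega) = \varphi(t,x_0(\omega),\bar{u}(\omega),\bar{w}(\omega))$, so the frequency of visits to $B$ exceeds $1 - (1+\varepsilon)Q(B^c)$. This is exactly the $(T,B,\rho,r)$-spanning property with $\rho = (1+\varepsilon/2)/(1+\varepsilon)$ and $r = (1+\varepsilon)Q(B^c) \in (0,1)$, the latter holding whenever $\varepsilon < Q(B)/Q(B^c)$.

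The $Q(B)=1$ case is a trivial modification: since $Q(B^c)=0$ forces $E[N_T]/T \to 0$, one applies Markov's inequality directly at the threshold $(1+\varepsilon)rT$ for a fixed $r \in (0,1)$ and $\varepsilon$ small enough that $(1+\varepsilon)r < 1$, to obtain the analogous bound on $P(N_T \geq (1+\varepsilon)rT)$. The only mildly subtle conceptual step -- and the one I expect to be worth spelling out carefully -- is the identification of the closed-loop state process with the open-loop flow $\varphi(\cdot,x_0(\omega),\bar{u}(\omega),\bar{w}(\omega))$ used in the definition of $s_B(T,\rho,r)$; it is this identification that reduces a question about the policy-driven process to a deterministic counting statement about the a priori bounded collection $S_T$. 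Measurability issues are benign under Assumption \ref{ass_stdborel}, so I do not anticipate further technical obstacles.
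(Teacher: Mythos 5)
Your proposal is correct and follows essentially the same route as the paper's proof: the same application of the AMS limit to $E[\unit_{B^c}(x_t)]$, the same Markov-inequality step producing the event $\tilde{\Omega}_T$ with probability at least $1-\tfrac{1+\varepsilon/2}{1+\varepsilon}$, the same identification of the closed-loop trajectory with $\varphi(t,x_0(\omega),\bar{u}(\omega),\bar{w}(\omega))$, and the same counting bound $\#S_T\leq(\#\MC)^T$ (the paper merely restricts $S_T$ to the sequences realized on $\tilde{\Omega}_T$, which changes nothing). The treatment of the case $Q(B)=1$ and the constraint $\varepsilon<Q(B)/Q(B^c)$ also match the paper.
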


\begin{IEEEproof}
We fix a causal coding and control policy which achieves the AMS property over a noiseless channel of capacity $C$. For a given set $B \in \BC(\R^N)$ with $0<Q(B)<1$, \eqref{eq_ams} implies%
\begin{equation}\label{eq_AMS_appl}
  \lim_{T \rightarrow \infty}\frac{1}{T}\sum_{t=0}^{T-1} P(x_t \in B^c) = 1 - Q(B) =: r.%
\end{equation}
Since $P(x_t \in B^c) = E[\unit_{B^c}(x_t)]$, this can also be written as%
\begin{equation*}
  \lim_{T \rightarrow \infty}E\Bigl[\frac{1}{T}\sum_{t=0}^{T-1} \unit_{B^c}(x_t) \Bigr] = r.%
\end{equation*}
We pick $\varepsilon \in (0,(1-r)/r)$ and choose $T_0 > 0$ so that%
\begin{equation*}%
  E\Bigl[\frac{1}{T}\sum_{t=0}^{T-1} \unit_{B^c}(x_t) \Bigr] \leq \left(1 + \frac{\varepsilon}{2}\right)r,\quad \forall T \geq T_0.%
\end{equation*}
By Markov's inequality, this implies that for every $T \geq T_0$ the event%
\begin{equation*}
  \tilde{\Omega}_T := \Bigl\{ \omega \in \Omega\ :\ \frac{1}{T}\sum_{t=0}^{T-1} \unit_{B^c}(x_t(\omega)) \leq (1 + \varepsilon)r \Bigr\}%
\end{equation*}
occurs with probability $P(\tilde{\Omega}_T) \geq \varepsilon/(2(1 + \varepsilon))$, since%
\begin{align*}
  P\Bigl(\frac{1}{T}\sum_{t=0}^{T-1}\unit_{B^c}(x_t) > (1 + \varepsilon)r\Bigr) &\leq \frac{E[\frac{1}{T}\sum_{t=0}^{T-1}\unit_{B^c}(x_t)]}{(1+\varepsilon)r}\\
	&\leq \frac{1 + \frac{\varepsilon}{2}}{1 + \varepsilon} = 1 - \frac{\varepsilon}{2(1+\varepsilon)}.%
\end{align*}
Observe that for every $\omega \in \tilde{\Omega}_T$ the number of $t$'s in $[0;T-1]$ satisfying $x_t(\omega) \in B^c$ is $\leq (1 + \varepsilon)rT$. Now for every $T \geq T_0$ consider the set%
\begin{equation*}
  S_T := \bigl\{ \bar{u}_{[0,T-1]}(\omega) \in U^T : \omega \in \tilde{\Omega}_T \bigr\}%
\end{equation*}
of control sequences generated in the time interval $[0;T-1]$ provided that $\omega \in \tilde{\Omega}_T$ and $x_0 = x_0(\omega)$, $\bar{w} = \bar{w}(\omega)$. Since the maximal number of different messages that can be transmitted in the time interval $[0;T-1]$ is $(\#\MC)^T$, we have%
\begin{equation*}
  \# S_T \leq (\#\MC)^T.%
\end{equation*}
We claim that $S_T$ is $(T,B,\frac{1 + \varepsilon/2}{1 + \varepsilon},(1 + \varepsilon)r)$-spanning. Indeed,%
\begin{equation*}
  P(\tilde{\Omega}_T) \geq \frac{\varepsilon}{2(1 + \varepsilon)} = 1 - \frac{1 + \frac{\varepsilon}{2}}{1 + \varepsilon},%
\end{equation*}
for every $\omega \in \tilde{\Omega}_T$ we have $\bar{u}_{[0,T-1]}(\omega) \in S_T$, and the number of $t$'s in $[0;T-1]$ with $x_t(\omega) = \varphi(t,x_0(\omega),\bar{u}(\omega),\bar{w}(\omega)) \in B$ is $\geq T - (1+\varepsilon)rT = (1 - (1 + \varepsilon)r)T$. Hence,%
\begin{equation*}
  s_B\Bigl(T,\frac{1 + \frac{\varepsilon}{2}}{1 + \varepsilon},(1 + \varepsilon)r\Bigr) \leq \# S_T \leq (\# \MC)^T \mbox{\quad for all\ } T \geq T_0.%
\end{equation*}
Taking logarithms, dividing by $T$ and letting $T \rightarrow \infty$ yields the assertion. The case $Q(B) = 1$ is handled by replacing \eqref{eq_AMS_appl} with the inequality%
\begin{equation*}
  \lim_{T \rightarrow \infty}\frac{1}{T}\sum_{t=0}^{T-1} P(x_t \in B^c) = Q(B^c) = 0 \leq r%
\end{equation*}
for an arbitrarily chosen $r\in(0,1)$, and applying the same arguments.%
\end{IEEEproof}

Lemma \ref{lem_AMS}, while sounding technical, has significant consequences, since it allows for the application of volume-growth arguments that have been used in the literature for deterministic settings.%

\section{Volume-expanding systems}\label{sec_specmodels1}

In this section, we assume throughout that the measure $\pi_0$ of the random variable $x_0$ is absolutely continuous w.r.t.~the Lebesgue measure $m$ on $\R^N$ and that the associated density is essentially bounded, i.e., $\pi_0 \ll_b m$.%


Consider a system of the form%
\begin{equation}\label{eq_volexpsys}
  x_{t+1} = f(x_t) + u_t + w_t%
\end{equation}
with $U = W = \R^N$ and an injective $C^1$-map $f:\R^N \rightarrow \R^N$ satisfying (with $\rmD f(x)$ denoting the Jacobian of $f$ at $x$)%
\begin{equation}\label{eq_volexp}
  |\det\rmD f(x)| \geq 1 \mbox{\quad for all\ } x\in\R^N.%
\end{equation}

\begin{theorem}\label{thm_volexp}
Consider system \eqref{eq_volexpsys} satisfying \eqref{eq_volexp} and $\pi_0 \ll_b m$. Assume that the AMS property is achieved with an associated AMS measure $Q$ via a causal coding and control policy over a noiseless channel of capacity $C$. Then for all Borel sets $B\subset\R^N$ with $0<m(B)<\infty$ we have%
\begin{equation}\label{eq_expsyslb}
  C \geq Q(B)\log\inf_{x\in B}|\det\rmD f(x)|.%
\end{equation}
\end{theorem}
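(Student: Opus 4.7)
The plan is to invoke Lemma \ref{lem_AMS} to reduce the capacity bound to a lower bound on the stabilization entropy $h_B(\rho_\varepsilon,r_\varepsilon)$ with $\rho_\varepsilon := (1+\varepsilon/2)/(1+\varepsilon)$ and $r_\varepsilon := (1+\varepsilon)Q(B^c)$, and then to prove that entropy bound by a volume-expansion argument adapted from deterministic invariance entropy. The cases $Q(B)=0$ (right-hand side of \eqref{eq_expsyslb} vanishes) and $Q(B)=1$ (use the second half of Lemma \ref{lem_AMS} and let $r\downarrow 0$) are handled separately, so I focus on $0<Q(B)<1$ and write $\alpha := \inf_{x\in B}|\det\rmD f(x)|$; the bound \eqref{eq_expsyslb} is vacuous if $\alpha=1$, so I may assume $\alpha>1$.

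Fix $T$ large and a $(T,B,\rho_\varepsilon,r_\varepsilon)$-spanning set $S$ with witness event $\tilde\Omega$. Using the independence of $x_0\sim\pi_0$ and $\bar w\sim\nu^{\Z_+}$, for each $\bar u\in S$ and $\bar w\in W^{\Z_+}$ I define
\begin{equation*}
  K_{\bar u,\bar w} := \bigl\{ x\in\R^N : \#\{t\in[0;T-1]:\varphi(t,x,\bar u,\bar w)\in B\} \geq (1-r_\varepsilon)T \bigr\},
\end{equation*}
so that by the spanning property $\tilde\Omega \subset \bigcup_{\bar u\in S}\{\omega:x_0(\omega)\in K_{\bar u,\bar w(\omega)}\}$. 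For $x\in K_{\bar u,\bar w}$ let $t^*(x)\in[0;T-1]$ denote the \emph{last} time index at which the trajectory visits $B$; this yields a Borel partition of $K_{\bar u,\bar w}$ into at most $T$ pieces $\{t^*=\tau\}$. On each such piece the map $x\mapsto\varphi(\tau,x,\bar u,\bar w)$ is injective (as $f$ is injective and controls/noise enter additively) and maps the piece into $B$, while its Jacobian determinant $\prod_{s=0}^{\tau-1}|\det\rmD f(\varphi(s,x,\bar u,\bar w))|$ picks up a factor $\geq\alpha$ at each of at least $\lceil(1-r_\varepsilon)T\rceil-1$ earlier visit times, the remaining factors being $\geq 1$ by \eqref{eq_volexp}. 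The change-of-variables formula gives $m(\{t^*=\tau\})\leq m(B)\,\alpha^{1-\lceil(1-r_\varepsilon)T\rceil}$, and summing over $\tau$ produces
\begin{equation*}
  m(K_{\bar u,\bar w}) \leq T\,m(B)\,\alpha^{1-\lceil(1-r_\varepsilon)T\rceil}.
\end{equation*}

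To transfer this Lebesgue estimate to a probability one I exploit $\pi_0\ll m$ via density truncation: since $\pi_0$ is finite and its Radon--Nikodym derivative $\rmd\pi_0/\rmd m$ is $m$-a.e.\ finite, for any $\eta>0$ there is $M_\eta$ with $\pi_0(\{\rmd\pi_0/\rmd m>M_\eta\})<\eta$, whence $\pi_0(A) \leq \eta + M_\eta\,m(A)$ for every Borel $A\subset\R^N$. I apply this \emph{once per $\bar w$} to the union $\bigcup_{\bar u\in S}K_{\bar u,\bar w}$ (so $\eta$ does not accumulate in $\#S$), integrate over $\bar w$ by Fubini, and combine with $P(\tilde\Omega)\geq 1-\rho_\varepsilon = \varepsilon/(2(1+\varepsilon))$ together with a choice $\eta<\varepsilon/(4(1+\varepsilon))$ to obtain $\#S \geq c_{\varepsilon,\eta}\,\alpha^{\lceil(1-r_\varepsilon)T\rceil-1}/(T\,m(B))$ for a positive $c_{\varepsilon,\eta}$ independent of $T$. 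Taking $(1/T)\log$ and letting $T\to\infty$ gives $h_B(\rho_\varepsilon,r_\varepsilon)\geq (1-r_\varepsilon)\log\alpha$, and Lemma \ref{lem_AMS} with $\varepsilon\downarrow 0$ (so $1-r_\varepsilon\to Q(B)$) then delivers $C\geq Q(B)\log\alpha$.

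The key subtlety I expect to be the main obstacle is avoiding a naive combinatorial loss of order $\binom{T}{\lceil(1-r_\varepsilon)T\rceil}$ in the volume step: partitioning $K_{\bar u,\bar w}$ by the \emph{entire} visit pattern $\{t\in[0;T-1]:x_t\in B\}$ would force summing the per-piece volume bound over an exponential number of subsets, producing a spurious binary-entropy term $H(Q(B))$ in the final rate. Partitioning by only the last visit time $t^*(x)$ reduces this count from exponential to linear in $T$, which is precisely what yields the clean factor $Q(B)\log\alpha$. A secondary technical point, that $\pi_0$ need not have a bounded Lebesgue density, is resolved by the density truncation above.
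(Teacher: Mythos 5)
Your proposal is correct and follows essentially the same route as the paper's proof: the reduction via Lemma \ref{lem_AMS}, the volume-expansion estimate accumulating a factor $\geq\alpha$ at each visit to $B$ before the image lands in $B$, and the grouping by the \emph{last} visit time to avoid the $\binom{T}{rT}$ combinatorial loss (the paper partitions by the full visit pattern $\Lambda$ and then regroups by $\max\Lambda$ using injectivity of $\varphi_{t,\bar u,\bar w}$, which is equivalent to your direct partition by $t^*(x)$). Your density-truncation step, which produces a $T$-uniform positive constant in the lower bound on $\# S$, is in fact slightly more careful than the paper's Step 1, which only records that $(\nu^{\Z_+}\tm m)(A_T)>0$ for each fixed $T$.
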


\begin{IEEEproof}
The proof is subdivided into four steps.%

\emph{Step 1.} Fix a Borel set $B$ with $0 < m(B) < \infty$ and let $S \subset U^T$ be a finite $(T,B,\rho,r)$-spanning set (if a finite spanning set does not exist for any $T$, the estimate becomes trivial). For the associated $\tilde{\Omega} \subset \Omega$ with $P(\tilde{\Omega}) \geq 1 - \rho$, define%
\begin{align*}
  A &:= \left\{ (\bar{w}(\omega),x_0(\omega)) : \omega \in \tilde{\Omega} \right\},\\
	A(\bar{u}) &:= \Bigl\{ (\bar{w},x) \in W^{\Z_+}\tm\R^N : \\
	    & \frac{1}{T} \# \left\{ t\in[0;T-1]\ :\ \varphi(t,x,\bar{u},\bar{w}) \in B \right\} \geq 1 - r \Bigr\},\\
  A(\bar{u},\bar{w}) &:= \{ x \in \R^N : (\bar{w},x) \in A(\bar{u}) \}%
\end{align*}
for all control and noise sequences $\bar{u}$ and $\bar{w}$, respectively. Note that the (universal) measurability of $A$ follows from Assumption \ref{ass_stdborel}. From the definition of $(T,B,\rho,r)$-spanning sets it immediately follows that%
\begin{equation}\label{eq_sets1}
  A \subset \bigcup_{\bar{u} \in S}A(\bar{u})%
\end{equation}
and we have (by Tonelli's theorem)%
\begin{equation}\label{eq_sets2}
  (\nu^{\Z_+} \tm m)(A(\bar{u})) = \int \nu^{\Z_+}(\rmd\bar{w}) m(A(\bar{u},\bar{w})).%
\end{equation}
We can write $A(\bar{u},\bar{w})$ as the disjoint union of the sets%
\begin{align*}
  A(\bar{u},\bar{w},\Lambda) &:= \{x \in \R^N : \forall t\in [0;T-1],\\
	                          &\qquad \varphi(t,x,\bar{u},\bar{w}) \in B \Leftrightarrow t \in \Lambda\},%
\end{align*}
where $\Lambda$ ranges through all subsets of $[0;T-1]$ with cardinality $\geq (1-r)T$. Then%
\begin{equation}\label{eq_sets3}
  m(A(\bar{u},\bar{w})) = \sum_{\Lambda \subset [0;T-1]\atop \#\Lambda \geq (1-r)T} m(A(\bar{u},\bar{w},\Lambda)).%
\end{equation}
Now we prove that%
\begin{equation}\label{eq_ameas}
  (\nu^{\Z_+} \tm m)(A) \geq \alpha%
\end{equation}
for a constant $\alpha>0$, independent of $T$. First, observe that by the independence of the random variables $x_0$ and $\bar{w} = (w_t)_{t\in\Z_+}$, $\nu^{\Z_+} \tm \pi_0$ is the probability measure of the joint variable $(\bar{w},x_0)$. Hence,%
\begin{align*}
  (\nu^{\Z_+} \tm \pi_0)(A) &= P(\{\omega \in \Omega : (\bar{w}(\omega),x_0(\omega)) \in A\})\\
	&\geq P(\tilde{\Omega}) \geq 1 - \rho.%
\end{align*}
If we write $p$ for the density of $\pi_0$ with respect to $m$ and assume that $p(x) \leq \gamma < \infty$, we thus find that%
\begin{align*}
  1 - \rho &\leq (\nu^{\Z_+} \tm \pi_0)(A) = \int \nu^{\Z_+}(\rmd\bar{w}) m(\rmd x) \unit_A(\bar{w},x) p(x)\\
	&\leq \gamma\, (\nu^{\Z_+} \tm m)(A),%
\end{align*}
implying that \eqref{eq_ameas} holds with the constant $\alpha := (1-\rho)/\gamma$.%

\emph{Step 2.} Writing $\varphi_{t,\bar{u},\bar{w}}(\cdot) = \varphi(t,\cdot,\bar{u},\bar{w})$, we define%
\begin{equation*}
  A_t(\bar{u},\bar{w},\Lambda) := \varphi_{t,\bar{u},\bar{w}}(A(\bar{u},\bar{w},\Lambda)),\quad t = 0,1,\ldots,T-1.%
\end{equation*}
Then we have%
\begin{equation*}
  A_t(\bar{u},\bar{w},\Lambda) \subset \left\{\begin{array}{rl}
	   B & \mbox{\quad for all\ } t \in \Lambda\\
		 B^c & \mbox{\quad for all\ } t \in [0;T-1]\backslash\Lambda%
		\end{array}\right.%
\end{equation*}
which immediately implies that for $c := \inf_{x\in B}|\det\rmD f(x)|$ (using that $f$ is injective and $C^1$)%
\begin{align*}
  m(A_{t+1}(\bar{u},\bar{w},\Lambda)) &\geq c \cdot m(A_t(\bar{u},\bar{w},\Lambda)) \mbox{\quad for all\ } t\in \Lambda,\\
  m(A_{t+1}(\bar{u},\bar{w},\Lambda)) &\geq m(A_t(\bar{u},\bar{w},\Lambda)) \mbox{\quad for all\ } t\notin \Lambda.%
\end{align*}
Let $t^* = t^*(\Lambda) := \max\Lambda$. Then an inductive argument yields%
\begin{equation}\label{eq_sets4}
  m(B) \geq m(A_{t^*}(\bar{u},\bar{w},\Lambda)) \geq c^{\#\Lambda-1} \cdot m(A(\bar{u},\bar{w},\Lambda)).%
\end{equation}

\emph{Step 3.} Combining \eqref{eq_sets4}, \eqref{eq_sets1}, \eqref{eq_sets2}, \eqref{eq_sets3} and \eqref{eq_ameas}, we obtain%
\begin{align*}
  \alpha &\leq (\nu^{\Z_+} \tm m)(A) \leq \# S \cdot \max_{\bar{u} \in S} (\nu^{\Z_+} \tm m)(A(\bar{u}))\allowdisplaybreaks\\
	&= \# S \cdot \max_{\bar{u}\in S} \int \nu^{\Z_+}(\rmd\bar{w}) m(A(\bar{u},\bar{w}))\allowdisplaybreaks\\
	&= \# S \cdot \max_{\bar{u}\in S} \sum_{\Lambda \subset [0;T-1]\atop \#\Lambda \geq (1-r)T}\int \nu^{\Z_+}(\rmd\bar{w}) m(A(\bar{u},\bar{w},\Lambda))\allowdisplaybreaks\\
	&\leq \# S \cdot \max_{\bar{u}\in S} \sum_{\Lambda \subset [0;T-1]\atop \#\Lambda \geq (1-r)T}\int \nu^{\Z_+}(\rmd\bar{w}) \frac{1}{c^{\#\Lambda-1}} m(A_{t^*(\Lambda)}(\bar{u},\bar{w},\Lambda))\allowdisplaybreaks\\
	&\leq \# S \cdot \frac{1}{c^{(1-r)T-1}} \max_{\bar{u}\in S}\\
	&\sum_{\Lambda \subset [0;T-1]\atop \#\Lambda \geq (1-r)T}\int \nu^{\Z_+}(\rmd\bar{w}) m(B \cap A_{t^*(\Lambda)}(\bar{u},\bar{w},\Lambda))\allowdisplaybreaks\\
	&= \# S \cdot \frac{1}{c^{(1-r)T-1}} \max_{\bar{u}\in S} \sum_{t=\lceil (1-r)T \rceil-1}^{T-1}\\
	&\sum_{\Lambda:\ t^*(\Lambda) = t}\int \nu^{\Z_+}(\rmd\bar{w}) m(B \cap A_t(\bar{u},\bar{w},\Lambda))\allowdisplaybreaks\\
	&= \# S \cdot \frac{1}{c^{(1-r)T-1}} \max_{\bar{u}\in S} \sum_{t=\lceil (1-r)T \rceil-1}^{T-1}\\
	&\int \nu^{\Z_+}(\rmd\bar{w}) \sum_{\Lambda:\ t^*(\Lambda) = t}m(B \cap A_t(\bar{u},\bar{w},\Lambda))\allowdisplaybreaks\\
 &\stackrel{(\diamond)}{=} \# S \cdot \frac{1}{c^{(1-r)T-1}} \max_{\bar{u}\in S} \sum_{t=\lceil (1-r)T \rceil-1}^{T-1} \\
&\int \nu^{\Z_+}(\rmd\bar{w})m\Bigl(B \cap \bigcup_{\Lambda:\ t^*(\Lambda) = t} A_t(\bar{u},\bar{w},\Lambda)\Bigr)\allowdisplaybreaks\\
&\leq \# S \cdot \frac{rT+1}{c^{(1-r)T-1}} \cdot m(B).%
\end{align*}
In $(\diamond)$ we use that the sets $A(\bar{u},\bar{w},\Lambda)$, $\Lambda \subset [0;T-1]$, are pairwise disjoint. Because of the assumption that $f$ and hence $\varphi_{t,\bar{u},\bar{w}}$ (for each $t$) is injective, this implies that also the sets $A_t(\bar{u},\bar{w},\Lambda)$ are pairwise disjoint. Hence, we can conclude that%
\begin{equation*}
  h_B(\rho,r) \geq \limsup_{T\rightarrow\infty} \frac{1}{T} \log\frac{c^{(1-r)T-1}}{rT+1} = (1-r)\log c.%
\end{equation*}

\emph{Step 4.} We complete the proof by applying Lemma \ref{lem_AMS}. Let us first assume that $0 < Q(B) < 1$. Then Lemma \ref{lem_AMS} together with Step 3 yields%
\begin{align*}
  C &\geq h_B\left(\frac{1 + \frac{\varepsilon}{2}}{1 + \varepsilon},(1 + \varepsilon)Q(B^c)\right)\\
	&\geq (1 - (1+\varepsilon)Q(B^c)) \log \inf_{x\in B}|\det\rmD f(x)|.%
\end{align*}
As $\varepsilon\rightarrow0$, the desired inequality follows. The case $Q(B) = 0$ is trivial and the case $Q(B) = 1$ follows by continuity.%
\end{IEEEproof}

\begin{remark}
The preceding theorem recovers, as a special case, \cite[Thm.~3.2]{yuksel2016stability}, which shows that $C \geq \inf_{x\in\R^N}\log|\det\rmD f(x)|$. However, the result there is more general with regard to the allowed class of channels.%
\end{remark}

\begin{remark}
In the inequality \eqref{eq_expsyslb} we see a trade-off between the $Q$-measure of the set $B$ and the infimal volume growth on $B$. If some characteristics of the measure $Q$ are known, one can try to optimize the lower bound by a careful choice of $B$. Also observe that%
\begin{equation*}
  \int Q(\rmd x)\log|\det\rmD f(x)| \geq Q(B)\inf_{x\in B}\log|\det\rmD f(x)|%
\end{equation*}
holds for all Borel sets $B$, where the left-hand side is the expected volume expansion w.r.t.~the AMS measure $Q$. Hence, it is tempting to conjecture that also the integral above is a lower bound on the capacity. Under the stronger criterion of asymptotic ergodicity, such a bound has been derived in \cite{GKY}.%
\end{remark}

The next corollary shows that imposing further properties on the AMS measure $Q$ can lead to more concrete bounds.%


\begin{corollary}\label{cor_momentcond}
Consider system \eqref{eq_volexpsys} satisfying \eqref{eq_volexp} and $\pi_0 \ll_b m$. Assume that the AMS property is achieved via a noiseless channel of capacity $C$ and the measure $Q$ satisfies for some $M,p>0$ the moment constraint%
\begin{equation*}
  \int Q(\rmd x)|x|^p \leq M.%
\end{equation*}
Then the channel capacity satisfies%
\begin{equation}\label{eq_mclb}
  C \geq \sup_{\kappa^p \geq M}\left(1 - \frac{M}{\kappa^p}\right)\min_{|x| \leq \kappa}\log|\det\rmD f(x)|.%
\end{equation}
\end{corollary}

\begin{IEEEproof}
Consider the set $B := \overline{B_{\kappa}(0)}$ for a fixed $\kappa>0$. By Markov's inequality, the moment constraint implies $Q(B) \geq 1 - \frac{M}{\kappa^p}$. Hence, Theorem \ref{thm_volexp} implies the assertion.%
\end{IEEEproof}

\begin{example}
For a linear system with $f(x) = Ax$, $A \in \R^{N \tm N}$ satisfying $|\det A| \geq 1$, our result implies the well-known relation (cf.~\cite{YukselAMSITArxiv,YukselBasarBook})%
\begin{equation*}
  C \geq \log|\det A| = \sum_{\lambda} n_{\lambda}\log|\lambda|%
\end{equation*}
with summation over all eigenvalues $\lambda$ of $A$ with associated multiplicities $n_{\lambda}$. By a simple decoupling argument this can be refined to show that $  C \geq \sum_{\lambda} \max\{0,n_{\lambda}\log|\lambda|\}$
\hfill$\diamond$
\end{example}

The next example shows that for nonlinear systems the supremum in \eqref{eq_mclb} is not necessarily attained as $\kappa \rightarrow \infty$, i.e., the lower bound \eqref{eq_expsyslb} indeed expresses a trade-off between the measure of $B$ and the minimal volume expansion on $B$.%

\begin{example}
Consider a map $f:\R \rightarrow \R$ with derivative%
\begin{equation*}
  f'(x) = \left\{ \begin{array}{rl}
	                     2 & \mbox{\ if } |x| \leq 1\\
											 2^{\frac{1}{\sqrt{|x|}}} & \mbox{\ if } |x| > 1%
									\end{array}\right.%
\end{equation*}
and note that $|f'(x)| = f'(x) > 1$ for all $x\in\R$. Since $f'$ is symmetric and monotonically decreasing on $[0,\infty)$, we obtain%
\begin{equation*}
  \min_{|x| \leq \kappa}\log|f'(x)| = \log|f'(\kappa)| \mbox{\quad for all\ } \kappa > 0.%
\end{equation*}
Corollary \ref{cor_momentcond}, applied with $M = p = 1$ thus yields the capacity bound%
\begin{equation*}
  C \geq \sup_{\kappa \geq 1} \left(1 - \frac{1}{\kappa}\right)\frac{1}{\sqrt{\kappa}}.%
\end{equation*}
A straightforward analysis shows that this supremum is attained as a maximum at $\kappa = 3$, and hence $C \geq 2/(3\sqrt{3})$.  \hfill$\diamond$%
\end{example}

\section{Inhomogeneous semilinear systems}\label{sec_specmodels2}

In this section, we also assume throughout that $\pi_0 \ll_b m$. We consider systems of the form%
\begin{equation}\label{eq_inhbil}
  x_{t+1} = A(u_t)x_t + Bv_t + w_t,%
\end{equation}
where $u_t \in U$ and $v_t \in V = \R^M$ are control variables and $w_t \in W = \R^N$ is the noise variable. We assume that $U$ is a compact, connected metric space and $A:U \rightarrow \Gl(N,\R)$ is continuous. The product space $U^{\Z}$ will be equipped with the product topology (and hence becomes a compact, connected metric space as well). Obviously, the case of linear systems with additive noise is covered here, since $A$ may be chosen to be constant.%

The homogeneous system associated with \eqref{eq_inhbil} is%
\begin{equation}\label{eq_homogen}
  x_{t+1} = A(u_t)x_t.%
\end{equation}
For a given initial state $x_0 \in \R^N$ and a control sequence $\bar{u} = (u_t)_{t\in\Z}$ we write $\Phi(t,\bar{u})x_0$ for the associated solution of \eqref{eq_homogen}. Here%
\begin{equation*}
  \Phi(t,\bar{u}) = \left\{\begin{array}{rl}
	  A(u_{t-1}) \cdots A(u_1)A(u_0) & \mbox{\ if } t \geq 1,\\
		                             I & \mbox{\ if } t = 0,\\
		A(u_t)^{-1} \cdots A(u_{-2})^{-1}A(u_{-1})^{-1} & \mbox{\ if } t < 0.%
		\end{array}\right.%
\end{equation*}
As we will see below, there always exists a finest continuous decomposition of the trivial vector bundle $U^{\Z} \tm \R^N$ into invariant subbundles:%
\begin{equation*}
  U^{\Z} \tm \R^N = \WC^1 \oplus \cdots \oplus \WC^r.%
\end{equation*}
Writing $\WC^i_{\bar{u}}$, $\bar{u}\in U^{\Z}$, for the fibers of the subbundles, their invariance can be expressed by the identities%
\begin{equation*}
  \Phi(t,\bar{u})\WC^i_{\bar{u}} = \WC^i_{\theta^t\bar{u}},\quad i = 1,\ldots,r,\quad t \in \Z,\ \bar{u} \in U^{\Z}.%
\end{equation*}
The subbundles $\WC^i$ generalize the Lyapunov spaces of a single operator, i.e., the sums of generalized eigenspaces corresponding to eigenvalues of the same modulus.%

Before we formulate our main result, we recall some facts about additive cocycles. An additive cocycle over a continuous map $T:X\rightarrow X$ is a function $\alpha:\Z_+ \tm X \rightarrow \R$, written as $(n,x) \mapsto \alpha_n(x)$, satisfying%
\begin{equation*}
  \alpha_{n+m}(x) = \alpha_n(x) + \alpha_m(T^n(x)) \mbox{\quad for all\ } n,m\in\Z_+,\ x\in X.%
\end{equation*}

\begin{lemma}\label{lem_ac}
Let $T:X\rightarrow X$ be a continuous map on a compact metric space $X$. Assume that $\alpha:\Z_+\tm X \rightarrow \R$ is a continuous additive cocycle over $T$. Then the following identities hold:%
\begin{align*}
& \inf_{x\in X}\liminf_{n\rightarrow\infty}\frac{1}{n}\alpha_n(x) = \inf_{x\in X}\limsup_{n\rightarrow\infty}\frac{1}{n}\alpha_n(x)\\
& = \lim_{n\rightarrow\infty}\frac{1}{n}\inf_{x\in X}\alpha_n(x) = \sup_{n \in \Z_{>0}}\frac{1}{n}\inf_{x\in X}\alpha_n(x).%
\end{align*}
Moreover, all infima above are attained, and the analogous identities with infima replaced by suprema hold.%
\end{lemma}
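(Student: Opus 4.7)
The plan is to exploit the fact that an additive cocycle is a Birkhoff sum of its time-one value: setting $\beta := \alpha_1$, the cocycle identity together with induction yields $\alpha_n(x) = \sum_{i=0}^{n-1}\beta(T^i x)$ with $\beta$ continuous (and hence bounded on the compact set $X$). This converts the statement into a question about ergodic averages of $\beta$ under $T$.

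Writing $a_n := \inf_{x\in X}\alpha_n(x)$, continuity of $\alpha_n$ and compactness of $X$ ensure that $a_n$ is attained at some $y_n \in X$. The cocycle identity gives $\alpha_{n+m}(x) = \alpha_n(x) + \alpha_m(T^n x) \geq \alpha_n(x) + a_m$, and taking the infimum in $x$ yields the superadditivity $a_{n+m}\geq a_n+a_m$. By the superadditive version of Fekete's lemma, $L := \lim_{n\to\infty} a_n/n = \sup_{n\in\Z_{>0}} a_n/n$ exists and is finite. Since $\alpha_n(x)\geq a_n$ for every $x$, I immediately obtain $\inf_x\liminf_n\alpha_n(x)/n\geq L$, and trivially $\inf_x\liminf\leq \inf_x\limsup$.

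The nontrivial step is producing a single point $x^*\in X$ with $\limsup_n \alpha_n(x^*)/n \leq L$; this will pin all four quantities in the statement to $L$ and establish attainment. I would do this via an empirical-measures argument. Form $\mu_n := \frac{1}{n}\sum_{i=0}^{n-1}\delta_{T^i y_n}$; by weak-$*$ compactness of probability measures on the compact metric space $X$, some subsequence $\mu_{n_k}$ converges to a probability measure $\mu^*$, and the telescoping estimate $T_*\mu_n - \mu_n = \frac{1}{n}(\delta_{T^n y_n} - \delta_{y_n})\to 0$ shows that $\mu^*$ is $T$-invariant. Weak convergence applied to the continuous $\beta$ gives $\int \beta\, d\mu^* = \lim_k a_{n_k}/n_k = L$, while for any $T$-invariant $\mu$ one has $a_n \leq \int \alpha_n\, d\mu = n\int \beta\, d\mu$ by invariance, so $L \leq \int \beta\, d\mu$ for every such $\mu$. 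Thus $L$ is the minimum of $\int \beta\, d\mu$ over invariant $\mu$, and is realized by $\mu^*$. Passing to an ergodic component $\mu'$ of $\mu^*$ that still achieves this minimum, Birkhoff's ergodic theorem supplies a $\mu'$-generic point $x^*$ with $\alpha_n(x^*)/n \to \int \beta\, d\mu' = L$.

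Combining these inequalities yields $L \leq \inf_x\liminf_n\alpha_n(x)/n \leq \inf_x\limsup_n\alpha_n(x)/n \leq L$, so all four quantities coincide with $L$ and the infima are attained at $x^*$. The parallel identities with suprema follow by applying the proven statement to the continuous additive cocycle $-\alpha_n$. The main technical obstacle I anticipate is the construction of $x^*$: it is the one place where the elementary Fekete-type argument must be supplemented by soft ergodic-theoretic machinery (Krylov-Bogolyubov existence of an invariant measure from the empirical averages, together with Birkhoff's ergodic theorem applied to an ergodic minimizer); a Fatou estimate against $\mu^*$ alone would only pin down $\liminf$, not $\limsup$.
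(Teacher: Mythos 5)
Your proof is correct. The paper itself does not present a proof of this lemma; instead it cites two sources: a ``purely topological'' proof in Kawan--Stender \cite[Cor.~2]{KSt} and a proof of a more general statement using ergodic theory in Morris \cite[App.~A]{Mor}. Your argument follows the ergodic-theoretic route. The elementary half (superadditivity of $a_n=\inf_x\alpha_n(x)$, Fekete, $\inf_x\liminf\geq L$) is the same in both approaches; the genuine content is manufacturing a single point $x^*$ with $\limsup_n\alpha_n(x^*)/n\leq L$, and you do this with the soft package Krylov--Bogolyubov (empirical measures of the near-minimizers $y_n$ converge to an invariant $\mu^*$ with $\int\beta\,d\mu^*=L$), the variational characterization $L=\min_{\mu\text{ inv}}\int\beta\,d\mu$, passage to an ergodic component $\mu'$ attaining the minimum, and Birkhoff's theorem to extract a $\mu'$-generic $x^*$. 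Each of these steps is valid here: $T_*$ is weak-$*$ continuous so $\mu^*$ is invariant, $X$ is compact metric so the ergodic decomposition applies, and $\int\beta\,d\mu_\omega\geq L$ for every invariant $\mu_\omega$ forces almost every ergodic component to attain $L$. The topological route in \cite{KSt} avoids invariant measures entirely, working instead with compactness and uniform continuity of the cocycle directly to produce the extremal point; it is more elementary in the tools it invokes but more delicate in the estimates. Your ergodic proof buys a conceptually cleaner derivation (the extremal point is simply a generic point of an extremal ergodic measure) at the cost of quoting the ergodic decomposition and Birkhoff's theorem. Both are legitimate, and the paper explicitly acknowledges both as acceptable references.
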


A purely topological proof of this lemma can be found in \cite[Cor.~2]{KSt}. For a proof of a more general result using ergodic theory see, e.g., \cite[App.~A]{Mor}.%

\begin{theorem}\label{thm_ibs}
Consider system \eqref{eq_inhbil}. Assume that $\pi_0 \ll_b m$ and that there exists a continuous and invariant vector bundle decomposition%
\begin{equation}\label{eq_vbdecom}
  U^{\Z} \tm \R^N = \VC^1 \oplus \VC^2%
\end{equation}
for the homogeneous system \eqref{eq_homogen}. Then, if the AMS property is achieved for \eqref{eq_inhbil} via a causal coding and control policy over a noiseless channel of capacity $C$, we have%
\begin{equation}\label{eq_inhbil_lb}
  C \geq \inf_{\bar{u}\in U^{\Z}}\limsup_{t \rightarrow \infty}\frac{1}{t}\log\left|\det\left(\Phi(t,\bar{u})_{|\VC^1_{\bar{u}}}:\VC^1_{\bar{u}} \rightarrow \VC^1_{\theta^t\bar{u}}\right)\right|.%
\end{equation}
\end{theorem}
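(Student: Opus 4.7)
The plan is to adapt the proof of Theorem \ref{thm_volexp}. The new twist is that the dynamics need not be volume-expanding on all of $\R^N$: volume grows only along the subbundle $\VC^1$. Accordingly I would replace the full Lebesgue volume by a ``$\VC^1$-projected volume'' that grows at the desired rate under the closed-loop dynamics (the additive terms $Bv_t+w_t$ being volume-preserving).

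First, equipping each fiber $\VC^1_{\bar{u}}$ with the Euclidean inner product inherited from $\R^N$, one checks that $\alpha_t(\bar{u}) := \log|\det(\Phi(t,\bar{u})|_{\VC^1_{\bar{u}}})|$ is a continuous additive cocycle over the shift $\theta$ on the compact metric space $U^\Z$; Lemma \ref{lem_ac} then rewrites the right-hand side of \eqref{eq_inhbil_lb} as $\beta := \lim_{t\to\infty}(1/t)\inf_{\bar{u}}\alpha_t(\bar{u})$, and for every $\varepsilon>0$ one has $\inf_{\bar{u}}\alpha_T(\bar{u}) \geq (\beta-\varepsilon)T$ for all large $T$. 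Next, I would pick closed balls $B \subset D \subset \R^N$ with $Q(B)$ close to $1$ and $\pi_0(D^c)$ small, invoke Lemma \ref{lem_AMS} to conclude $C \geq h_B(\rho,r)$ with $\rho = (1+\varepsilon/2)/(1+\varepsilon)$ and $r = (1+\varepsilon)Q(B^c)$ small, and mimic the set-up of the proof of Theorem \ref{thm_volexp} with initial conditions further restricted to $D$, so that the joint ``good'' set of noise/initial-condition pairs has positive $(\nu^{\Z_+}\times m)$-measure (using $\pi_0 \ll m$ together with $P(\tilde\Omega\cap\{x_0\in D\})>0$).

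The volume estimate is now made at the $\VC^1$-level. For fixed $(\bar{u},\bar{v},\bar{w})$, the map $x_0 \mapsto x_t = \Phi(t,\bar{u})x_0 + c_t$ is affine; by invariance of the decomposition, its composition with $\pi^1_{\theta^t\bar{u}}$ is an affine map on $\VC^1$ with linear part $\Phi(t,\bar{u})|_{\VC^1_{\bar{u}}}$. Consequently, for $t^* = \max\Lambda$,
\begin{equation*}
|\det(\Phi(t^*,\bar{u})|_{\VC^1_{\bar{u}}})|\cdot m_{\VC^1_{\bar{u}}}(\pi^1_{\bar{u}}(A(\bar{u},\bar{v},\bar{w},\Lambda))) \leq m_{\VC^1_{\theta^{t^*}\bar{u}}}(\pi^1_{\theta^{t^*}\bar{u}}(B)) \leq K_B,
\end{equation*}
with $K_B$ uniform in $\bar{u}$ by compactness of $U^\Z$ and continuity of the projections $\pi^1$. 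A Fubini-type inequality $m(E) \leq C_D\,m_{\VC^1_{\bar{u}}}(\pi^1_{\bar{u}}(E))$ valid for bounded $E \subset D$ (with $C_D$ uniform in $\bar{u}$, exploiting that $\pi^2_{\bar{u}}(D)$ has uniformly bounded $\VC^2$-measure) converts this into an upper bound on the ambient $m(A(\bar{u},\bar{v},\bar{w},\Lambda))$. Running the summation argument from Step 3 of the proof of Theorem \ref{thm_volexp} with this bound and $|\det(\Phi(t^*,\bar{u})|_{\VC^1_{\bar{u}}})|\geq 2^{(\beta-\varepsilon)((1-r)T-1)}$ yields $h_B(\rho,r) \geq (1-r)(\beta-\varepsilon)$; letting $\varepsilon \to 0$ and enlarging $B$ so that $Q(B^c)\to 0$ concludes $C \geq \beta$.

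The main obstacle is the ``$\VC^1$-projected volume'' estimate itself---securing uniform-in-$\bar{u}$ constants in both the upper bound $m_{\VC^1}(\pi^1 B) \leq K_B$ and the Fubini-type bound $m(E) \leq C_D\,m_{\VC^1}(\pi^1 E)$ for bounded $E \subset D$. Both rest on compactness of $U^\Z$ and continuity of the bundle decomposition. A secondary subtlety: the bundle is defined over the bi-infinite shift space $U^\Z$, whereas coding-generated control sequences live in $U^{\Z_+}$; one must extend each sequence to a bi-infinite one (using compactness of $U$) and verify that the relevant quantities depend continuously on the extension.
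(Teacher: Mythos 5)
Your proposal follows the paper's strategy almost exactly: reduce the right-hand side of \eqref{eq_inhbil_lb} to the limit $\beta$ via the cocycle Lemma \ref{lem_ac}, bound the $\VC^1$-projected volume of the "good" sets of initial conditions by $R(b)/|\det\Phi(t^*,\bar{u})_{|\VC^1_{\bar{u}}}|$ using the variation-of-constants form of the solution and the intertwining $P(\theta^t\bar{u})\Phi(t,\bar{u})=\Phi(t,\bar{u})P(\bar{u})$, convert back to ambient Lebesgue measure by a Fubini-type estimate, and finish with Lemma \ref{lem_AMS} letting the ball $B=\overline{B_b(0)}$ grow so that $Q(B^c)\to 0$. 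The one genuine difference is in the Fubini step: the paper does not restrict the initial condition to a bounded set $D$; instead it observes that $Z(\bar{u},\bar{v},\bar{w},\Lambda)\subset\varphi_{t_-}^{-1}(\overline{B_b(0)})$ with $t_-=\min\Lambda\leq rT$, which confines the set to a ball of radius $\|\Phi(t_-,\bar{u})^{-1}\|\,b\leq c^{\lceil rT\rceil}b$ and produces the correction term $-(N-k)r\log c$ that vanishes as $r\to0$. Your restriction to a fixed $D$ with $\pi_0(D^c)$ small (keeping $P(\tilde\Omega\cap\{x_0\in D\})>0$) gives a $T$-independent constant $C_D$ and so dispenses with that correction; this is a legitimate and arguably cleaner variant.

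One imprecision you should fix: you cannot "run the summation argument from Step 3 of Theorem \ref{thm_volexp}" verbatim, because the disjointness of the sets $A(\bar{u},\bar{v},\bar{w},\Lambda)$ (and of their forward images) is destroyed by the projection $\pi^1_{\bar{u}}$ -- the projected sets may overlap, so the identity $(\diamond)$ is unavailable. The paper instead sums crudely over all $\Lambda$ with $\#\Lambda\geq(1-r)T$, of which there are $n_r(T)=2^{H(r)T+o(T)}$ by Lemma \ref{lem_largedev}, and consequently obtains $h_B(\rho,r)\geq -H(r)-(N-k)r\log c+\alpha(1-r)$ rather than your stated $(1-r)(\beta-\varepsilon)$. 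The extra $-H(r)$ is harmless since $r\to0$ at the end, so your conclusion $C\geq\beta$ survives, but the intermediate bound as you wrote it is not justified. Your remaining flagged subtleties (uniformity of the constants $K_B$ and $C_D$ over $\bar{u}\in U^{\Z}$, the choice of adapted inner products making $\VC^1_{\bar{u}}$ and $\VC^2_{\bar{u}}$ orthogonal, and the extension of finite control sequences to $U^{\Z}$) are exactly the points the paper handles by compactness of $U^{\Z}$ and continuity of the decomposition, and are resolved as you anticipate.
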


\begin{IEEEproof}
First observe that the mapping $(t,\bar{u}) \mapsto \log|\det \Phi(t,\bar{u})_{|\VC^1_{\bar{u}}}|$ is a continuous additive cocycle over the shift $\theta:U^{\Z} \rightarrow U^{\Z}$. Hence, by Lemma \ref{lem_ac} the limit %
\begin{equation}\label{eq_volexpass}
  \lim_{t \rightarrow \infty}\frac{1}{t}\inf_{\bar{u}\in U^{\Z}}\log\left|\det\left(\Phi(t,\bar{u})_{|\VC^1_{\bar{u}}}:\VC^1_{\bar{u}} \rightarrow \VC^1_{\theta^t\bar{u}}\right)\right|%
\end{equation}
exists and coincides with the right-hand side in \eqref{eq_inhbil_lb}. If this limit is $\leq 0$, the statement becomes trivial, hence we may and will assume that it is positive.%

The proof now proceeds along the following four steps.%

\emph{Step 1.} Let us write $P(\bar{u}) \in \LC(\R^N,\R^N)$ for the projection onto $\VC^1_{\bar{u}}$ along $\VC^2_{\bar{u}}$. Observe that by the variation-of-constants formula we can write the solutions of \eqref{eq_inhbil} in the form%
\begin{equation}\label{eq_solform}
  \varphi(t,x,(\bar{u},\bar{v}),\bar{w}) = \Phi(t,\bar{u})x + \beta(t,\bar{u},\bar{v},\bar{w}).%
\end{equation}
We let $k$ denote the rank of the subbundle $\VC^1$ (i.e., the common dimension of its fibers) and write $m^k_{\bar{u}}$ for the $k$-dimensional Lebesgue measure on $\VC^1_{\bar{u}} = \im P(\bar{u})$. Observe that the invariance of $\VC^1$ and $\VC^2$ implies%
\begin{equation}\label{eq_bundle_inv}
  P(\theta^t\bar{u})\Phi(t,\bar{u}) = \Phi(t,\bar{u})P(\bar{u}),\quad \forall t \in \Z,\ \bar{u} \in U^{\Z}.%
\end{equation}
Moreover, since $\VC^1$ is a continuous subbundle, the map $\bar{u} \mapsto P(\bar{u})$ is continuous. By compactness of $U^{\Z}$, the following maximum exists:%
\begin{equation}\label{eq_defcb}
  R(b) := \max_{\bar{u} \in U^{\Z}}m^k_{\bar{u}}(P(\bar{u})\overline{B_b(0)}) < \infty.%
\end{equation}
Indeed, this follows from the fact that the Lebesgue measure of the image of a ball under a projection is proportional to the product of its non-vanishing singular values, which depend continuously on the projection.%

\emph{Step 2.} Fix $b>0$ and $\rho,r \in (0,1)$ with $r < \frac{1}{2}$. Assume that there exists a minimal finite $(T,B,\rho,r)$-spanning set $S \subset (U \tm V)^T$ for $B := \overline{B_b(0)}$ (which later will be justified by invoking Lemma \ref{lem_AMS}). Then there is $\tilde{\Omega} \subset \Omega$ with $P(\tilde{\Omega}) \geq 1 - \rho$ so that for each $\omega \in \tilde{\Omega}$ there is $(\bar{u},\bar{v}) \in S$ with%
\begin{equation*}
  \frac{1}{T} \# \left\{ t\in[0;T-1] : |\varphi(t,x_0(\omega),(\bar{u},\bar{v}),\bar{w}(\omega))| \leq b \right\} \geq 1 - r.%
\end{equation*}
Putting%
\begin{align*}
  \tilde{\Omega}(\bar{u},\bar{v}) &:= \Bigl\{\omega\in\tilde{\Omega} : \frac{1}{T} \# \{ t\in[0;T-1]\ :\\
	&\qquad |\varphi(t,x_0(\omega),(\bar{u},\bar{v}),\bar{w}(\omega))| \leq b \} \geq 1 - r \Bigr\}%
\end{align*}
for every $(\bar{u},\bar{v}) \in S$, we obtain%
\begin{equation}\label{eq_ibs_omega}
  \tilde{\Omega} = \bigcup_{(\bar{u},\bar{v}) \in S} \tilde{\Omega}(\bar{u},\bar{v}).%
\end{equation}
Using the notation $\bar{w}(\omega) = (w_t(\omega))_{t\in\Z_+}$, for any $\Lambda \subset [0;T-1]$ we define%
\begin{align*}
   Z &:= \left\{ (\bar{w}(\omega),x_0(\omega)) \in W^{\Z_+} \tm \R^N\ :\ \omega \in \tilde{\Omega} \right\},\\
	Z(\bar{u},\bar{v},\Lambda) &:= \bigl\{ (\bar{w},x) \in W^{\Z_+} \tm \R^N :\\
	&\qquad |\varphi(t,x,(\bar{u},\bar{v}),\bar{w})| \leq b, \forall t \in \Lambda \bigr\}.%
\end{align*}
Then, as in the proof of Theorem \ref{thm_volexp}, we obtain%
\begin{align}\label{eq_inb_incl}
\begin{split}
  Z &\subset \bigcup_{(\bar{u},\bar{v}) \in S}\bigcup_{\Lambda \subset [0;T-1]\atop\#\Lambda\geq(1-r)T}Z(\bar{u},\bar{v},\Lambda) \\
	&= \bigcup_{\Lambda \subset [0;T-1]\atop\#\Lambda\geq(1-r)T}\bigcup_{(\bar{u},\bar{v}) \in S}Z(\bar{u},\bar{v},\Lambda).%
\end{split}
\end{align}
We define the probability measure $\mu := \nu^{\Z_+}$ on $W^{\Z_+}$. Then, for any $\Lambda \subset [0;T-1]$,%
\begin{align*}
  & \sum_{(\bar{u},\bar{v}) \in S}\mu \tm m^k_{\bar{u}} (\id \tm P(\bar{u})(Z(\bar{u},\bar{v},\Lambda))\\
	&\leq \# S \cdot \max_{(\bar{u},\bar{v}) \in S} \mu \tm m^k_{\bar{u}} (\id \tm P(\bar{u})(Z(\bar{u},\bar{v},\Lambda))\\
	&= \# S \cdot \max_{(\bar{u},\bar{v}) \in S} \int \mu(\rmd\bar{w}) m^k_{\bar{u}}(P(\bar{u})Z(\bar{u},\bar{v},\bar{w},\Lambda))%
\end{align*}
with $Z(\bar{u},\bar{v},\bar{w},\Lambda) := \{x\in\R^N : |\varphi(t,x,(\bar{u},\bar{v}),\bar{w})| \leq b,\ \forall t\in\Lambda \}$. Fixing $\Lambda \subset [0;T-1]$ and putting $t_+ = t_+(\Lambda) := \max\Lambda$, an easy computation using \eqref{eq_solform} and \eqref{eq_bundle_inv} leads to%
\begin{align*}
  &\Phi(t_+,\bar{u})P(\bar{u})Z(\bar{u},\bar{v},\bar{w},\Lambda)\\
	&\subset P(\theta^{t_+}\bar{u})B_b(0) - P(\theta^{t_+}\bar{u})\beta(t_+,\bar{u},\bar{v},\bar{w}),%
\end{align*}
which implies (using \eqref{eq_defcb})%
\begin{align*}
 & m^k_{\theta^{t_+}\bar{u}}\left( \Phi(t_+,\bar{u})P(\bar{u})Z(\bar{u},\bar{v},\bar{w},\Lambda) \right)\\
 &\qquad \leq m^k_{\theta^{t_+}\bar{u}}\left(P(\theta^{t_+}\bar{u})B_b(0)\right) \leq R(b).%
\end{align*}
Now%
\begin{align*}
  &m^k_{\theta^{t_+}\bar{u}}\left(\Phi(t_+,\bar{u})P(\bar{u})Z(\bar{u},\bar{v},\bar{w},\Lambda)\right)\\
	&= \left|\det \Phi(t_+,\bar{u})_{|\VC^1_{\bar{u}}}\right| \cdot m^k_{\bar{u}}\left(P(\bar{u})Z(\bar{u},\bar{v},\bar{w},\Lambda)\right).%
\end{align*}
Putting everything together, we end up with%
\begin{align*}
  & \sum_{(\bar{u},\bar{v}) \in S}\mu \tm m^k_{\bar{u}}\left(\id \tm P(\bar{u})(Z(\bar{u},\bar{v},\Lambda))\right)\\
	&\leq \# S \cdot \max_{(\bar{u},\bar{v}) \in S} \int \mu(\rmd\bar{w}) \frac{R(b)}{|\det \Phi(t_+(\Lambda),\bar{u})_{|\VC^1_{\bar{u}}}|}\\
	&\leq \# S \cdot \sup_{\bar{u} \in U^{\Z}} \frac{R(b)}{|\det \Phi(t_+(\Lambda),\bar{u})_{|\VC^1_{\bar{u}}}|}.%
\end{align*}
To complete the proof, we have to find a reasonable lower bound for the first term above.%

\emph{Step 3.} Fix a subset $\Lambda \subset [0;T-1]$ with $\#\Lambda \geq (1-r)T$ and define $t_- = t_-(\Lambda) := \min\Lambda$. Then%
\begin{equation*}
  Z(\bar{u},\bar{v},\bar{w},\Lambda) \subset \varphi_{t_-,\bar{u},\bar{v},\bar{w}}^{-1}(\overline{B_b(0)}).%
\end{equation*}
The set on the right-hand side is contained in the closed ball%
\begin{equation*}
  \hat{B} := \overline{B_{\|\Phi(t_-,\bar{u})^{-1}\|b}(-\Phi(t_-,\bar{u})^{-1}\beta(t_-,\bar{u},\bar{v},\bar{w}))}.%
\end{equation*}
As a consequence,%
\begin{equation*}
  m(Z(\bar{u},\bar{v},\bar{w},\Lambda)) \leq m\bigl(P(\bar{u})^{-1}(P(\bar{u})Z(\bar{u},\bar{v},\bar{w},\Lambda)) \cap \hat{B}\bigr).%
\end{equation*}
Let $\langle\cdot,\cdot\rangle_{\bar{u}}$ be an inner product on $\R^N$ in which $\VC^1_{\bar{u}}$ and $\VC^2_{\bar{u}}$ are orthogonal and write $m_{\bar{u}}$ for the associated Lebesgue measure. Using compactness of $U^{\Z}$, we can do this in such a way that $m(E) \leq K\cdot m_{\bar{u}}(E)$ with a constant $K>0$ for every Lebesgue measurable set $E \subset \R^N$ and every $\bar{u} \in U^{\Z}$. For any measurable set $A \subset \VC^1_{\bar{u}}$, a simple computation yields%
\begin{equation*}
  m_{\bar{u}}(P(\bar{u})^{-1}(A) \cap \hat{B}) \leq m^k_{\bar{u}}(A) \cdot m^{N-k}((I-P(\bar{u}))\hat{B}),%
\end{equation*}
where $m^{N-k}$ denotes the $(N-k)$-dimensional Lebesgue measure on $\VC^2_{\bar{u}}$. Using again the compactness of $U^{\Z}$, we can find another constant $\tilde{K}>0$ (see Step 1) with%
\begin{equation*}
  m^{N-k}((I-P(\bar{u}))\hat{B}) \leq \tilde{K}\left(\|\Phi(t_-,\bar{u})^{-1}\|b\right)^{N-k}.%
\end{equation*}
Putting everything together, we arrive at%
\begin{align*}
  &m\left(Z(\bar{u},\bar{v},\bar{w},\Lambda)\right)\\
	&\leq \mbox{const} \cdot \|\Phi(t_-,\bar{u})^{-1}\|^{N-k} m^k_{\bar{u}}\left(P(\bar{u})Z(\bar{u},\bar{v},\bar{w},\Lambda)\right).%
\end{align*}

\emph{Step 4.} We combine the results of steps 2 and 3 to obtain%
\begin{align*}
  & \frac{\mbox{const}}{\|\Phi(t_-(\Lambda),\bar{u})^{-1}\|^{N-k}} \cdot \sum_{(\bar{u},\bar{v}) \in S} \mu \tm m(Z(\bar{u},\bar{v},\Lambda))\\
	&\leq \sum_{(\bar{u},\bar{v}) \in S}\mu \tm m^k_{\bar{u}}\left(\id \tm P(\bar{u})(Z(\bar{u},\bar{v},\Lambda))\right)\\
	&\leq \# S \cdot \sup_{\bar{u} \in U^{\Z}} \frac{R(b)}{|\det \Phi(t_+(\Lambda),\bar{u})_{|\VC^1_{\bar{u}}}|}.%
\end{align*}
Letting $n_r(T)$ denote the number of subsets of $[0;T-1]$ with $\#\Lambda \geq (1-r)T$, using \eqref{eq_inb_incl}, we end up with%
\begin{align*}
  &\gamma \leq (\mu \tm m)(Z)\\
	&\leq \mbox{const} \cdot n_r(T) \cdot \# S \cdot \max_{\Lambda \subset [0;T-1]\atop \# \Lambda \geq (1-r)T} \sup_{\bar{u} \in U^{\Z}} \frac{\|\Phi(t_-(\Lambda),\bar{u})^{-1}\|^{N-k}}{|\det \Phi(t_+(\Lambda),\bar{u})_{|\VC^1_{\bar{u}}}|}%
\end{align*}
with a positive constant $\gamma$, where the first inequality follows from $\pi_0 \ll_b m$, as in the proof of Theorem \ref{thm_volexp}. Applying the logarithm, dividing by $T$ and letting $T \rightarrow \infty$ yields%
\begin{align*}
  &0 \leq H(r) + h_B(\rho,r)\\
	&+ \limsup_{T\rightarrow\infty}\frac{1}{T} \max_{\Lambda \subset [0;T-1]\atop \# \Lambda \geq (1-r)T} \sup_{\bar{u} \in U^{\Z}} \log  \frac{\|\Phi(t_-(\Lambda),\bar{u})^{-1}\|^{N-k}}{|\det \Phi(t_+(\Lambda),\bar{u})_{|\VC^1_{\bar{u}}}|}.%
\end{align*}
Here we use, in particular, Lemma \ref{lem_largedev}. Observing that $t_-(\Lambda) \leq rT$, we can estimate%
\begin{equation*}
  \|\Phi(t_-(\Lambda),\bar{u})^{-1}\| \leq \left(\max\left\{1,\max_{u\in U}\|A(u)^{-1}\|\right\}\right)^{\lceil rT \rceil} =: c^{\lceil rT \rceil},%
\end{equation*}
leading to%
\begin{align*}
  0 &\leq H(r) + h_B(\rho,r) + (N-k) r \log c \\
	&- \liminf_{T\rightarrow\infty}\frac{1}{T}\inf_{\bar{u},\Lambda} \log\left|\det \Phi(t_+(\Lambda),\bar{u})_{|\VC^1_{\bar{u}}}\right|.%
\end{align*}
Now we use that $t_+(\Lambda) \geq (1-r)T$. Let $\alpha>0$ denote the limit in \eqref{eq_volexpass} and let $\varepsilon \in (0,\alpha)$. Then, for sufficiently large $T$,%
\begin{equation*}
  \inf_{\bar{u}\in U^{\Z}}\left|\det\Phi(t_+(\Lambda),\bar{u})_{|\VC^1(\bar{u})}\right| \geq 2^{(\alpha-\varepsilon)t_+(\Lambda)} \geq 2^{(\alpha-\varepsilon)(1-r)T}.%
\end{equation*}
Since this holds for all $\Lambda$ with $\#\Lambda \geq (1-r)T$ and $\varepsilon>0$ was arbitrary, we find that%
\begin{equation*}
  h_B(\rho,r) \geq -H(r) - (N-k) r \log c + \alpha(1 - r).%
\end{equation*}
Observe that this holds for arbitrary $b>0$, $\rho \in (0,1)$, $r \in (0,\frac{1}{2})$ and $B = \overline{B_b(0)}$. If $b$ is chosen so that $0 < Q(\overline{B_b(0)}) < 1$, Lemma \ref{lem_AMS} yields%
\begin{equation*}
  C \geq -H(r_b) - (N-k)r_b \log c + \alpha (1 - r_b),\quad r_b := Q(\overline{B_b(0)}^c).%
\end{equation*}
If $Q(\overline{B_b(0)}) < 1$ for all $b>0$, we can let $b \rightarrow \infty$, which implies $r_b \rightarrow 0$ and thus%
\begin{equation}\label{eq_C_lb_alpha}
  C \geq \alpha = \lim_{t \rightarrow \infty}\frac{1}{t}\log \inf_{\bar{u}\in U^{\Z}}\left|\det\left(\Phi(t,\bar{u})_{|\VC^1_{\bar{u}}}:\VC^1_{\bar{u}} \rightarrow \VC^1_{\theta^t\bar{u}}\right)\right|.%
\end{equation}
Otherwise, we have $Q(\overline{B_b(0)}) = 1$ for all sufficiently large $b$ and Lemma \ref{lem_AMS} yields%
\begin{equation*}
  C \geq -H(r) - (N-k)r \log c + \alpha (1 - r) \quad \forall r \in (0,1),%
\end{equation*}
also leading to \eqref{eq_C_lb_alpha}. Since $(t,\bar{u}) \mapsto \log|\det\Phi(t,\bar{u})_{|\VC^1_{\bar{u}}}|$ is a continuous additive cocycle over the shift $\theta:U^{\Z} \rightarrow U^{\Z}$, Lemma \ref{lem_ac} guarantees that the limit and the infimum in \eqref{eq_C_lb_alpha} can be interchanged (replacing $\lim$ with $\limsup$ or $\liminf$), which completes the proof.%
\end{IEEEproof}

\begin{remark}
The proof of the above theorem is partly modeled according to \cite[Thm.~3.3]{Kaw}. For a more detailed explanation of the arguments used in Step 3, see \cite[Lem.~3.3]{Kaw}.%
\end{remark}

\begin{example}
Consider the special case of a linear system, i.e., $A(u) \equiv A \in \R^{N \tm N}$. Then the vector bundle decomposition \eqref{eq_vbdecom} can be chosen as%
\begin{equation*}
  U^{\Z} \tm \R^N = (U^{\Z} \tm E^u(A)) \oplus (U^{\Z} \tm E^{cs}(A)),%
\end{equation*}
where $E^u(A)$ and $E^{cs}(A)$ are the unstable and center-stable subspace of $A$, respectively. This immediately implies%
\begin{equation*}
  C \geq \sum_{\lambda} \max\{0,n_{\lambda}\log|\lambda|\}%
\end{equation*}
with summation over the eigenvalues $\lambda$ of $A$ with algebraic multiplicities $n_{\lambda}$.  \hfill$\diamond$%
\end{example}

In the following, we will show that there always exists a finest continuous decomposition of $U^{\Z} \tm \R^N$ into invariant subbundles%
\begin{equation}\label{eq_selgrade_dec}
  U^{\Z} \tm \R^N = \WC^1 \oplus \cdots \oplus \WC^r,%
\end{equation}
which is related to the dynamical behavior of the system induced by \eqref{eq_homogen} on the projective bundle $U^{\Z} \tm \P^{N-1}$. This follows from a general result about linear flows on vector bundles known as \emph{Selgrade's theorem}, which reads as follows.%

\begin{proposition}\label{prop_selgrade}
Let $V \rightarrow B$ be a finite-dimensional real vector bundle with compact metric base space $B$. Assume that $\phi_t:V\rightarrow V$, $t\in\Z$, is a continuous discrete-time linear flow on $V$ and that the induced flow on $B$ is chain transitive. Then there exists a unique finest Morse decomposition $\MC_1,\ldots,\MC_r$ of the induced flow on the projective bundle $\P V \rightarrow B$, and $1 \leq r \leq d = \dim V_b$, $b\in B$. Every Morse set $\MC_i$ defines a $\phi_t$-invariant subbundle of $V$ via%
\begin{equation*}
  V_i = \P^{-1}\MC_i = \{v \in V\ :\ v\neq 0 \mbox{ implies } \P v \in \MC_i\}%
\end{equation*}
and the following decomposition into a Whitney sum holds:%
\begin{equation*}
  V = V_1 \oplus \cdots \oplus V_r.%
\end{equation*}
\end{proposition}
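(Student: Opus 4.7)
The plan is to pass to the induced flow on the projective bundle $\P V \to B$, which is compact since $B$ is compact and each projective fiber is compact, and to apply Conley's theory of Morse decompositions on compact metric spaces to this projected flow $\P\phi_t$. That theory yields a finest Morse decomposition $\MC_1,\ldots,\MC_r$ whose Morse sets coincide with the chain-recurrent components of $\P\phi_t$; chain transitivity of the base flow then forces each $\MC_i$ to project surjectively onto $B$, so that $r$ is bounded by the maximal number of disjoint closed invariant sets one can host in a single fiber.

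The substantive step is to show that each preimage $V_i := \P^{-1}\MC_i$, together with the zero section, is a $\phi_t$-invariant continuous linear subbundle of $V$. Invariance under $\phi_t$ is automatic from invariance of $\MC_i$ under $\P\phi_t$ combined with linearity of $\phi_t$ on fibers. For the linear and subbundle structure I would proceed via attractor-repeller duality: each attractor $\mathcal{A}$ of $\P\phi_t$ lifts (with the zero section) to a $\phi_t$-invariant closed subbundle $\hat{\mathcal{A}} \subset V$, its dual repeller $\mathcal{A}^*$ lifts to a complementary invariant subbundle $\hat{\mathcal{A}}^*$, and the two yield a Whitney sum $V = \hat{\mathcal{A}} \oplus \hat{\mathcal{A}}^*$ by a fiberwise argument. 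Iterating along the chain of attractors $\emptyset = \mathcal{A}_0 \subsetneq \mathcal{A}_1 \subsetneq \cdots \subsetneq \mathcal{A}_r = \P V$ produced by the Morse decomposition then builds the desired splitting $V = V_1 \oplus \cdots \oplus V_r$, with constancy of rank across fibers coming from continuity of the attractor-repeller decomposition together with chain transitivity of the base. The bound $r \leq d = \dim V_b$ follows because each $V_i$ contributes at least one dimension to every fiber.

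The main obstacle is the fiberwise linearity claim: verifying that the lift of a projective attractor is an honest linear subbundle, not merely a cone over a closed subset of $\P V$. This is Selgrade's key insight and is not a formal consequence of the abstract Conley machinery; it uses linearity of $\phi_t$ and compactness of the fibers essentially, via the fact that $\omega$-limits of orbits of a linear flow on projective space are confined to projectivizations of invariant linear subspaces. A complete proof would make this quantitative by analyzing forward and backward $\omega$-limits of lines in $V$ and showing that any two limit classes lying in the same attractor must span an invariant subspace whose projectivization still lies in that attractor, which forces the lift to be closed under addition and hence linear.
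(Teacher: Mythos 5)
Your route is genuinely different from the paper's. The paper does not prove Proposition \ref{prop_selgrade} from scratch: it invokes the general results of Patr\~ao and San Martin on Morse decompositions of semiflows on flag bundles of principal bundles with semisimple structural group, specializes to the frame bundle of $V$ (structural group $\Gl(N,\R)$) with the projective bundle as the relevant flag bundle, and reads off the Morse sets fiberwise as projectivized eigenspaces of a field of matrices conjugate to a fixed diagonal matrix $H$. You instead outline the classical dynamical proof in the spirit of Selgrade's original argument (whose continuous-time version is in Colonius--Kliemann): project to $\P V$, apply Conley theory, and lift attractor--repeller pairs to invariant linear subbundles. Your approach is more elementary and self-contained in principle, while the paper's buys the statement as a corollary of heavier Lie-theoretic machinery with essentially no work. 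You also correctly isolate the crux --- that the lift of a projective attractor together with the zero section is an honest \emph{linear} subbundle rather than just a closed invariant cone --- and honestly flag that your plan only sketches it.

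One logical point needs repair. The existence of a \emph{finest} Morse decomposition is not a formal consequence of abstract Conley theory on the compact space $\P V$: for a general flow the chain recurrent set may have infinitely many chain components, so there are finer and finer Morse decompositions with no finest one. Here finiteness comes precisely from the linear structure. Once you know that every attractor of the projectivized flow lifts (with the zero section) to a continuous invariant subbundle and that each attractor--repeller pair yields a Whitney sum, any strictly increasing chain of attractors has length at most $d$; hence there are only finitely many attractors and a finest Morse decomposition exists. The attractor-lifting lemma must therefore come \emph{before} the assertion of a finest Morse decomposition, not after. Your later remark that each $V_i$ contributes at least one dimension to every fiber is exactly the right mechanism for the bound $r\le d$, but it has to be deployed at this earlier stage; the intermediate heuristic that $r$ is ``bounded by the maximal number of disjoint closed invariant sets one can host in a single fiber'' is not by itself a valid counting argument and should be dropped in favor of the subbundle count.
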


For an introduction to the concepts of chain transitivity and Morse decompositions used in this proposition we refer to \cite{CKl,PSM}. A continuous-time version of the proposition can also be found in \cite{CKl}. The discrete-time version follows from a more general result, see \cite[Thm.~6.2 and Thm.~7.5]{PSM}.%

The next proposition shows that Selgrade's theorem can be applied to the linear flow generated by equation \eqref{eq_homogen} on the trivial vector bundle $U^{\Z} \tm \R^N$.%

\begin{proposition}
The solutions of the homogeneous equation \eqref{eq_homogen} define a continuous discrete-time linear flow on the trivial vector bundle $V := U^{\Z} \tm \R^N$ with compact metric base space $U^{\Z}$. This flow is given by $\phi_t(\bar{u},x) = (\theta^t\bar{u},\Phi(t,\bar{u})x)$, $t\in\Z$. Moreover, the shift map $\theta:U^{\Z} \rightarrow U^{\Z}$ is chain transitive.%
\end{proposition}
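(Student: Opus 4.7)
My plan is to establish the two claims separately. For (i), showing that $\phi_t(\bar{u},x)=(\theta^t\bar{u},\Phi(t,\bar{u})x)$ defines a continuous discrete-time linear flow on the trivial bundle $V=U^{\Z}\tm\R^N$, I would first note that $U^{\Z}$ is compact metrizable by the Tychonoff theorem, so $V\to U^{\Z}$ is a vector bundle with compact metric base. It then suffices to verify: (a) the cocycle identity $\Phi(t+s,\bar{u})=\Phi(t,\theta^s\bar{u})\Phi(s,\bar{u})$ for all $s,t\in\Z$, which follows directly from the definition of $\Phi$ using $(\theta^s\bar{u})_j=u_{s+j}$ together with the invertibility of each $A(u)$ to handle negative times; this yields $\phi_{t+s}=\phi_t\circ\phi_s$ and $\phi_0=\id$, hence invertibility of each $\phi_t$ with inverse $\phi_{-t}$. (b) Linearity of $\Phi(t,\bar{u})$ on the fiber $\R^N$ is immediate. (c) Joint continuity of $\phi_t$ reduces to continuity of $\bar{u}\mapsto\Phi(t,\bar{u})$, which holds because the coordinate projections $\bar{u}\mapsto u_j$ are continuous in the product topology, $A$ is continuous by assumption, and matrix inversion and multiplication on $\Gl(N,\R)$ are continuous.

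For (ii), I would fix the compatible metric $d(\bar{u},\bar{v}):=\sum_{t\in\Z}2^{-|t|}\min\{1,d_U(u_t,v_t)\}$ on $U^{\Z}$; its key property is that any two sequences agreeing on $[-N,N]$ have distance at most $2^{-N+1}$. Given $\bar{u},\bar{v}\in U^{\Z}$ and $\varepsilon>0$, I would choose $N$ with $2^{-N+1}<\varepsilon$, set $n:=2N+1$, put $\bar{x}^0:=\bar{u}$ and $\bar{x}^n:=\bar{v}$, and for $1\leq i\leq n-1$ define the sliding interpolation
\begin{equation*}
  \bar{x}^i_t := \begin{cases} u_{t+i} & \text{if } t\leq N-i,\\ v_{t+i-n} & \text{if } t\geq N-i+1.\end{cases}
\end{equation*}
A short direct calculation shows that $\theta\bar{x}^i=\bar{x}^{i+1}$ holds identically on $\Z$ for every $i\in\{1,\ldots,n-2\}$, while at the two endpoints $\theta\bar{x}^0$ and $\bar{x}^1$ coincide on $\{t\leq N-1\}$ and $\theta\bar{x}^{n-1}$ and $\bar{x}^n$ coincide on $\{t\geq -N\}$; in both cases the mismatched tails contribute at most $2^{-N+1}<\varepsilon$ to $d$, yielding the desired $\varepsilon$-chain from $\bar{u}$ to $\bar{v}$.

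The only real obstacle is devising the sliding-window interpolation in (ii); once that formula is written down, the interior identities $\theta\bar{x}^i=\bar{x}^{i+1}$ and the endpoint tail estimates reduce to short explicit computations. Part (i) is essentially a matter of unwinding definitions and checking continuity of each building block, namely $A$, matrix inversion, and multiplication.
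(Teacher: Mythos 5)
Your proposal is correct. For the flow property, linearity and continuity you do exactly what the paper does (the paper verifies $\phi_{t+s}=\phi_t\circ\phi_s$ via the cocycle identity $\Phi(t+s,\bar{u})=\Phi(t,\theta^s\bar{u})\Phi(s,\bar{u})$ and declares continuity and fiberwise linearity clear), just with the routine details spelled out. For chain transitivity you diverge: the paper disposes of it in one sentence by invoking the density of periodic sequences in $U^{\Z}$, whereas you construct an explicit $\varepsilon$-chain by concatenating the right half of $\bar{u}$ with the left half of $\bar{v}$ and sliding the window; your interior points $\bar{x}^1,\dots,\bar{x}^{n-1}$ are precisely the genuine orbit segment $\theta^i\bar{y}$ of the concatenated sequence $\bar{y}$, so only the two endpoint jumps carry error, each bounded by the tail sum $2^{-N+1}<\varepsilon$. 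Your computation checks out (the interior identities $\theta\bar{x}^i=\bar{x}^{i+1}$ hold exactly, and the mismatch sets $\{t\geq N\}$ and $\{t\leq -N-1\}$ contribute at most $2^{-N+1}$). Arguably your route is the more self-contained one: density of periodic points by itself does not formally imply chain transitivity (a disjoint union of two circles under the identity has all points periodic but is not chain transitive), so the paper's one-line justification implicitly leans on the topological transitivity of the full shift, which is exactly the concatenation argument you carry out explicitly. The trade-off is length versus rigor; both are valid, but yours closes a small gap the paper leaves to the reader.
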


\begin{IEEEproof}
We know that $U^{\Z}$, equipped with the product topology, is a compact and connected metric space. The flow properties ($\phi_0(\bar{u},x) = (\bar{u},x)$ and $\phi_{t+s}(\bar{u},x) = \phi_t(\phi_s(\bar{u},x))$) are easy to see. Continuity and (fiber-wise) linearity of $\phi$ are clear. From the fact that the periodic points of $\theta$ (which are precisely the periodic sequences) are dense in $U^{\Z}$, it follows that every point in $U^{\Z}$ is chain recurrent. It is well-known that a homeomorphism is chain transitive on any closed set which is connected and consists of chain recurrent points.%
\end{IEEEproof}

Combining Selgrade's theorem with Theorem \ref{thm_ibs}, we obtain the following corollary.%

\begin{corollary}\label{cor_selgrade_lb}
Consider system \eqref{eq_inhbil} and the Selgrade decomposition \eqref{eq_selgrade_dec} associated with the homogeneous system \eqref{eq_homogen}. Assume that the subbundles are ordered such that%
\begin{equation*}
  \lim_{t \rightarrow \infty}\frac{1}{t}\inf_{\bar{u}\in U^{\Z}}\log\left|\det\left(\Phi(t,\bar{u})_{|\WC^i_{\bar{u}}}:\WC^i_{\bar{u}} \rightarrow \WC^i_{\theta^t\bar{u}}\right)\right| > 0%
\end{equation*}
for $i = 1,\ldots,s$, where $s \in \{0,1,\ldots,r\}$ is the maximal number with this property. Then, if $\pi_0 \ll_b m$ and the AMS property is achieved over a noiseless channel of capacity $C$,%
\begin{equation*}
  C \geq \sum_{i=1}^s \inf_{\bar{u}\in U^{\Z}}\limsup_{t \rightarrow \infty}\frac{1}{t}\log\left|\det\left(\Phi(t,\bar{u})_{|\WC^i_{\bar{u}}}:\WC^i_{\bar{u}} \rightarrow \WC^i_{\theta^t\bar{u}}\right)\right|,%
\end{equation*}
where the right-hand side is defined as zero if $s = 0$.%
\end{corollary}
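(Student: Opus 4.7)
My approach would be to apply Theorem~\ref{thm_ibs} exactly once, choosing the two-part decomposition that groups the ``unstable'' Selgrade components together, and then to decompose the resulting determinant on the grouped subbundle into a sum of determinants on the individual $\WC^i$ using invariance and a Gram-factor argument.

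First I would set $\VC^1 := \WC^1 \oplus \cdots \oplus \WC^s$ and $\VC^2 := \WC^{s+1} \oplus \cdots \oplus \WC^r$ (if $s=0$ the bound is vacuous, so assume $s \geq 1$). Both are continuous $\Phi$-invariant subbundles of $U^{\Z} \tm \R^N$, being Whitney sums of the continuous invariant subbundles provided by Proposition~\ref{prop_selgrade}. Theorem~\ref{thm_ibs} then immediately yields
\begin{equation*}
  C \geq \inf_{\bar{u}\in U^{\Z}}\limsup_{t \to \infty}\frac{1}{t}\log\left|\det \Phi(t,\bar{u})_{|\VC^1_{\bar{u}}}\right|,
\end{equation*}
and by Lemma~\ref{lem_ac} applied to the continuous additive cocycle $(t,\bar{u}) \mapsto \log|\det \Phi(t,\bar{u})_{|\VC^1_{\bar{u}}}|$ over the shift $\theta$, the right-hand side equals $\lim_{t\to\infty}(1/t)\inf_{\bar{u}}\log|\det \Phi(t,\bar{u})_{|\VC^1_{\bar{u}}}|$.

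The core step is then the identification
\begin{equation*}
  \log\left|\det \Phi(t,\bar{u})_{|\VC^1_{\bar{u}}}\right| = \sum_{i=1}^s \log\left|\det \Phi(t,\bar{u})_{|\WC^i_{\bar{u}}}\right| + O(1),
\end{equation*}
with the $O(1)$-term uniform in both $t$ and $\bar{u}$. Since each $\WC^i$ is $\Phi$-invariant, the restriction $\Phi(t,\bar{u})_{|\VC^1_{\bar{u}}}$ is block-diagonal in any basis adapted to the splitting $\VC^1_{\bar{u}} = \WC^1_{\bar{u}} \oplus \cdots \oplus \WC^s_{\bar{u}}$. Computed against the Lebesgue measure induced by the ambient Euclidean inner product on each fiber, the determinant differs from the product of the block determinants precisely by a Gram factor $G(\theta^t\bar{u})/G(\bar{u})$ that records the obliqueness of the splitting. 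Because the Selgrade decomposition is continuous on the compact base $U^{\Z}$, this $G$ is continuous and strictly positive, hence bounded above and below, so $\log G$ is bounded.

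Finally, dividing by $t$, letting $t\to\infty$, and using $\inf_{\bar{u}}\sum_i \alpha^i_t(\bar{u}) \geq \sum_i \inf_{\bar{u}} \alpha^i_t(\bar{u})$ for the cocycles $\alpha^i_t(\bar{u}) := \log|\det \Phi(t,\bar{u})_{|\WC^i_{\bar{u}}}|$, one obtains
\begin{equation*}
  C \;\geq\; \lim_{t\to\infty}\frac{1}{t}\inf_{\bar{u}}\sum_{i=1}^s \alpha^i_t(\bar{u}) \;\geq\; \sum_{i=1}^s \lim_{t\to\infty}\frac{1}{t}\inf_{\bar{u}} \alpha^i_t(\bar{u}),
\end{equation*}
and one last application of Lemma~\ref{lem_ac} to each cocycle $\alpha^i$ individually rewrites each summand in the required $\inf_{\bar{u}}\limsup_t$ form. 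The part I expect to be fiddly is the Gram-factor bookkeeping: one has to verify that the continuous dependence of the Selgrade splitting on $\bar{u}$ really does translate into a uniform bound on the logarithm of $G$ on the compact base $U^{\Z}$, so that it gets washed out after dividing by $t$; everything else is a routine assembly of Theorem~\ref{thm_ibs} and Lemma~\ref{lem_ac}.
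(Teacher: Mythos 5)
Your proposal is correct and follows essentially the same path as the paper: set $\VC^1 = \WC^1 \oplus \cdots \oplus \WC^s$, $\VC^2 = \WC^{s+1} \oplus \cdots \oplus \WC^r$, apply Theorem~\ref{thm_ibs}, factor the determinant on $\VC^1$ into the product over the $\WC^i$'s up to a bounded correction, and finish with $\inf\sum \geq \sum\inf$ and two applications of Lemma~\ref{lem_ac}. The one place you are more careful than the paper is the ``multiplicative constant'': the paper merely asserts it, whereas you correctly identify it as a Gram factor $G(\theta^t\bar{u})/G(\bar{u})$ whose logarithm is uniformly bounded by continuity and compactness and therefore washes out under the $\frac{1}{t}$ normalization.
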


\begin{IEEEproof}
Define $\VC^1 := \WC^1 \oplus \cdots \oplus \WC^s$, $\VC^2 := \WC^{s+1} \oplus \cdots \oplus \WC^r$. Then $U^{\Z} \tm \R^N = \VC^1 \oplus \VC^2$. Since $|\det\Phi(t,\bar{u})_{|\VC^1_{\bar{u}}}|$ is, up to some multiplicative constant, the product of the numbers $|\det\Phi(t,\bar{u})_{|\WC^i_{\bar{u}}}|$, $i=1,\ldots,s$, it follows that%
\begin{align*}
   &\inf_{\bar{u}\in U^{\Z}}\limsup_{t \rightarrow \infty}\frac{1}{t}\log|\det\Phi(t,\bar{u})_{|\VC^1_{\bar{u}}}|\\
	&= \lim_{t \rightarrow \infty} \frac{1}{t} \inf_{\bar{u}\in U^{\Z}}\sum_{i=1}^s \log\left|\det\Phi(t,\bar{u})_{|\WC^i_{\bar{u}}}\right|\\
	&\geq \lim_{t \rightarrow \infty} \sum_{i=1}^s \frac{1}{t} \inf_{\bar{u}\in U^{\Z}} \log\left|\det\Phi(t,\bar{u})_{|\WC^i_{\bar{u}}}\right|\\
	&= \sum_{i=1}^s \lim_{t \rightarrow \infty} \frac{1}{t} \inf_{\bar{u}\in U^{\Z}} \log\left|\det\Phi(t,\bar{u})_{|\WC^i_{\bar{u}}}\right|\\
	&= \sum_{i=1}^s  \inf_{\bar{u}\in U^{\Z}} \limsup_{t \rightarrow \infty} \frac{1}{t} \log\left|\det\Phi(t,\bar{u})_{|\WC^i_{\bar{u}}}\right|,%
\end{align*}
where we use Lemma \ref{lem_ac} twice. This implies the result.%
\end{IEEEproof}

\begin{example}
In the special case when $r = 1$ (only one Selgrade bundle) and the system is asymptotically volume-expanding, i.e.,%
\begin{equation*}
  \lim_{t \rightarrow \infty}\frac{1}{t}\inf_{\bar{u}\in U^{\Z}}\log\left|\det\Phi(t,\bar{u})\right| > 0,%
\end{equation*}
the lower bound of Corollary \ref{cor_selgrade_lb} reduces to $C \geq \min_{u \in U}\log|\det A(u)|$. Indeed, it is easy to see that the infimum over $\bar{u}\in U^{\Z}$ is then attained at the constant sequence with value $u_* = \mathrm{argmin} |\det A(u)|$.  \hfill$\diamond$%
\end{example}

For the general case, one can use numerical methods to approximate the Lyapunov exponents, and hence, the associated volume growth rates, for the homogeneous semilinear system \eqref{eq_homogen}. For continuous-time bilinear control systems, methods for the computation of Lyapunov exponents based on algorithms for solving discounted optimal control problems have been developed in \cite{Gruene} (see also \cite[App.~D]{CKl}). In general, these methods also work for discrete-time systems.%

\section{The noisy channel case}\label{sec_noisychannels}

For discrete noiseless channels, the key idea combining the volume-growth based approaches for deterministic models with the stochastic system setup was the observation that the number of control sequences is bounded from above by the total number of received messages. This approach clearly does not directly apply to a noisy channel setup, for there can be an arbitrarily large number of possibly distinct received channel outputs, but these may not carry reliable information. In the following, we develop a new method to address this for a discrete memoryless channel (DMC). For a review of channel capacity with feedback see \cite{CsiszarKorner}, \cite[Sec.~5.3.4]{YukselBasarBook}.%

\begin{figure}[h]
\begin{center}
\includegraphics[height=2.5cm,width=8.0cm]{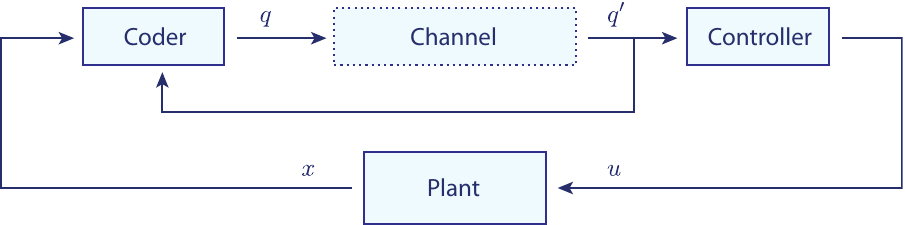}
\caption{Control of a system over a noisy channel with feedback\label{LLL1}}
\end{center}
\end{figure}

Figure \ref{LLL1} shows the control loop, using a DMC with feedback for data transmission from the encoder to the controller. The channel has a finite input alphabet $\MC$ and a finite output alphabet $\MC'$. The channel input $q_t$ at time $t$ is generated by a function $\gamma^e_t$ so that $q_t = \gamma^e_t(x_{[0,t]},q'_{[0,t-1]})$. The channel maps $q_t$ to $q'_t$ in a stochastic fashion so that $P(q'_t \in \cdot |q_t,q_{[0,t-1]},q'_{[0,t-1]}) = P(q'_t \in \cdot |q_t)$ is a conditional probability measure on $\MC'$ for all $t \in \Z_+$, for every realization $q_t,q_{[0,t-1]},q'_{[0,t-1]}$. The controller, upon receiving the information from the channel, generates its decision at time $t$, also causally: $u_t = \gamma_t^c(q'_{[0,t]})$.%

Consider a DMC with channel capacity $C$ (we note that for DMCs, it is a well-known result that feedback cannot increase the capacity). Then the following property, known as the \emph{strong converse}, holds, see \cite{Kemperman}, \cite[Problem 10.17]{CsiszarKorner}: For any $R > C$, under any coding policy:%
\begin{equation}\label{eq_StrongConverse}
  \lim_{T \rightarrow \infty}p_e(T) = 1,%
\end{equation}
where $p_e(T)$ is the average probability of error among $2^{RT}$ equally likely messages after the channel is used $T$ times under coding and decoding policies admissible according to the standard information-theoretic formulation of communication with noiseless feedback, cf.~\cite{ShannonDMCFeedback}.%

Now we consider a scalar system of the form%
\begin{equation}\label{eq_scalarsys}
  x_{t+1} = f(x_t) + u_t + w_t%
\end{equation}
with a $C^1$-function $f:\R \rightarrow \R$ satisfying%
\begin{equation}\label{eq_expanUnif}
  |f'(x)| \geq 1 \mbox{\quad for all\ } x\in\R.%
\end{equation}

Our main result reads as follows.%

\begin{theorem}\label{thm_noisychannel}
Consider system \eqref{eq_scalarsys} satisfying \eqref{eq_expanUnif}. Assume that $\pi_0 \ll m$ with $p$ denoting the density with respect to $m$, that $K := \mathrm{supp}(\pi_0)$ is a compact interval and%
\begin{equation*}
  p_{\min} := \essinf_{x \in K}p(x) > 0, \quad \quad  p_{\max} := \esssup_{x \in K}p(x) < \infty.%
\end{equation*}
Then, if the AMS property is achieved via a causal coding and control strategy over a DMC of capacity $C$, we have%
\begin{equation*}
  C \geq \inf_{x\in\R}\log|f'(x)|.%
\end{equation*}
\end{theorem}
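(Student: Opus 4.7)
The plan is to adapt the volume-expansion core of Theorem~\ref{thm_volexp} to the noisy-channel setting, and to close the bound using the strong converse~\eqref{eq_StrongConverse} in place of a raw message count. Write $\alpha := \inf_{x\in\R}\log|f'(x)|$ and assume $\alpha>0$ (otherwise the bound is vacuous); set $c_\ast := 2^\alpha > 1$, so that $|f(y)-f(y')|\geq c_\ast|y-y'|$ for all $y,y'\in\R$ by the mean-value theorem and~\eqref{eq_expanUnif}. The strategy is to build, from any causal coding and control scheme achieving AMS, a near-uniform virtual message $J \in \{1,\dots,M_T\}$ with $M_T \approx c_\ast^{(1-2r)T}$, and a decoder $\hat J$ observing only $\bar q' := q'_{[0,T-1]}$ whose error probability stays bounded away from $1$; the strong converse will then force $\log M_T/T \leq C + o(1)$, and taking $r\downarrow 0$ gives the conclusion.

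For Steps~1--2 I would reuse the Markov estimate behind Lemma~\ref{lem_AMS}. Fix small $r,\varepsilon>0$, choose $b>0$ with $Q([-b,b])>1-r/(1+\varepsilon)$, write $B:=[-b,b]$, and extract for every large $T$ an event $\tilde\Omega_T$ with $P(\tilde\Omega_T)\geq \kappa := \varepsilon/(2(1+\varepsilon))$ on which the trajectory visits $B$ at least $(1-r)T$ times in $[0;T{-}1]$. For any realization $(\bar q',\bar w)$ the control sequence $\bar u=\bar u(\bar q')$ is determined by the controller, and I would study the initial-condition fiber
\[
  S(\bar q',\bar w) := \{ x_0 \in K : \#\{t\in[0;T{-}1] : \varphi(t,x_0,\bar u,\bar w)\in B \}\geq (1-r)T \}.
\]
If $x_0,x_0'\in S(\bar q',\bar w)$, inclusion--exclusion yields a common good time $t^\ast\geq(1-2r)T-1$ at which both trajectories lie in $B$; the shared controls and noise, combined with the uniform expansion $|f(y)-f(y')|\geq c_\ast|y-y'|$, force $|x_0-x_0'|\leq 2b\,c_\ast^{-(1-2r)T+1}$, so $\mathrm{diam}\,S(\bar q',\bar w)$ is exponentially small.

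Next I would partition $K$ into $M_T := \lfloor |K|\,c_\ast^{(1-2r)T-1}/(4b)\rfloor$ equal-length intervals $I_1,\dots,I_{M_T}$ and set $J := j$ iff $x_0\in I_j$; the density bounds $p_{\min}\leq p\leq p_{\max}$ make $P(J=j)$ uniform up to a constant factor, so a version of the strong converse for approximately-uniform sources applies. Because each cell is strictly wider than the diameter bound above, $S(\bar q',\bar w)$ meets at most two adjacent cells $I_{j_1},I_{j_2}$. Were a genie to hand the decoder the noise $\bar w$, MAP decoding would succeed with conditional probability $\geq 1/2$ on $\tilde\Omega_T$, yielding $P(\hat J = J)\geq \kappa/2$. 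The real decoder sees only $\bar q'$, so I would replace the genie with an optimal-transport (Wasserstein) control on how the conditional law of $(x_0,\bar w)$ given $\bar q'$ spreads across cells; the bounded-density, compact-support structure of $\pi_0$ is exactly what keeps this spread quantitative, and a generalized strong converse of the DMC coding theorem tailored to this almost-two-cell structure still delivers $\liminf_{T\to\infty} P(\hat J = J)>0$. The strong converse then forces $\limsup_{T\to\infty}(\log M_T)/T \leq C$, which reads $(1-2r)\alpha\leq C$; letting $r\downarrow 0$ closes the proof.

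The main obstacle is precisely the step from the clean two-cell structure on each noise fiber $\{\bar w = \mathrm{const}\}$ to a nontrivial success probability after marginalizing over the posterior of $\bar w$ given $\bar q'$: naïvely the MAP posterior could diffuse across many cells and push the error to $1$, nullifying the strong converse. The optimal-transport bound on the conditional spread, enabled by $p_{\min}>0$, $p_{\max}<\infty$, and the compactness of $K=\supp(\pi_0)$, is the technical heart of the argument; everything else is either a scalar replica of the volume-expansion estimate from Theorem~\ref{thm_volexp} (Steps~1--2), a standard quantization of the initial state (Step~3), or a textbook invocation of the strong converse once the virtual coding problem is in place.
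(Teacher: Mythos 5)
Your architecture matches the paper's in outline (exponentially small initial-condition fibers from the uniform expansion, quantization of $x_0$, near-uniformity of the quantized message from the density bounds, and a contradiction with the strong converse), but the step you yourself flag as the ``technical heart'' --- passing from the two-cell structure on each noise fiber $\{\bar{w}=\mathrm{const}\}$ to a decoder that observes only $\bar{q}'$ --- is a genuine gap, and the repair you sketch would not work as described. The hypotheses you invoke for it ($p_{\min}>0$, $p_{\max}<\infty$, compactness of $K=\supp(\pi_0)$) constrain the law of $x_0$ only; the noise law $\nu$ is completely arbitrary, so nothing controls how the posterior of $\bar{w}$ given $\bar{q}'$ smears the fiber $S(\bar{q}',\bar{w})$ across many cells, and no ``generalized strong converse for an almost-two-cell structure'' exists off the shelf. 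The paper avoids marginalizing over the noise altogether: arguing by contradiction, it shows that if the estimation-error event \eqref{eq_671} had small probability, then by conditioning there would exist a \emph{single} noise realization $\bar{w}_*$ for which the conditional error probability is still small; the entire bin construction and the auxiliary coding scheme are then built for that fixed $\bar{w}_*$, so the estimate $\hat{x}_0(T,\bar{u})$ is a deterministic function of the control sequence and the posterior-spread problem never arises. Note also that optimal transport enters the paper for a different purpose than the one you assign it: it couples the non-uniform bin-index distribution (bounded via $p_{\min},p_{\max}$) to a uniform auxiliary message $W$ in total variation, which is what licenses applying the strong converse \eqref{eq_StrongConverse}, stated for equiprobable messages.

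A second, related omission is the dichotomy on the \emph{control rate} $R=\limsup_{T}\frac{1}{T}\log\#\UC_T$, where $\UC_T$ is the set of control sequences the controller can actually generate. In the paper, when $R<(1-3r^*)\log c$ the union of all candidate estimation intervals has Lebesgue measure tending to zero, so estimation fails with probability one and the contradiction is reached without any coding argument; only when $R$ is large are there enough disjointified, grouped bins to form a code of rate exceeding $C$. In your scheme the partition of $K$ is fixed in advance with $M_T\approx c_*^{(1-2r)T}$ cells, so the rate condition is automatic, but the claim $\liminf_{T}P(\hat{J}=J)>0$ is precisely what fails when the controller produces too few distinct control sequences, and your argument then yields no contradiction in that regime. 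Repairing the proposal essentially requires importing both of the paper's devices: the reduction to a fixed noise realization and the case split on $R$.
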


Before the proof, it may be instructive to explain the proof approach which builds on the construction of an auxiliary coding problem that relates the number -per time stage- of distinct control actions (in a similar spirit that was the basis of the definition of stabilization entropy) to an information transmission problem and in turn to an analysis on channel capacity with feedback; by considering the fact that the number of informative messages per time stage to be transmitted with regard to the initial state cannot be less than the desired bound. The coding problem is related to a channel coding theorem via optimal transport inequalities.%

\begin{IEEEproof}
Throughout the proof, we use the following notation: Observing that we have three sources of stochasticity -- the initial state $x_0$, the noise sequence $(w_t)$ and the channel noise -- every time we make a statement about the probability $P(E)$ of an event $E$, we will add subscripts to the letter $P$, indicating which probability measures are involved in computing this probability: The subscript ``$\mathrm{i}$'' is used for the initial state, subscript ``$\mathrm{n}$'' for the noise and subscript ``$\mathrm{c}$'' for the channel.%

Let $c := \inf_{x\in\R}|f'(x)|$. Without loss of generality, we can assume that $c>1$. We prove the theorem by contradiction, assuming that $C < \log c$. First, we fix a sufficiently small $r^*>0$ so that%
\begin{equation}\label{eq_contrad_impl}
  (1 - 3r^*)\log c > C.%
\end{equation}
Since the AMS measure $Q$ is a probability measure, we can choose for every sufficiently small $\alpha \in (0,r^*)$ a $b>0$ with%
\begin{equation}\label{eq_intervalmeas}
  Q([-b,b]) > 1 - \alpha.%
\end{equation}
Later we will consider an auxiliary coding scheme, where the initial state $x_0$ is to be estimated at each time stage $T \in \Z_+$ through the knowledge of the control sequence $\bar{u} \in U^{T+1}$, applied by the controller in $[0;T]$. Given a noise realization $\bar{w}$ (that we will fix later), as an estimate for $x_0$ at time $T$ we use the center $\hat{x}_0(T,\bar{u},\bar{w})$ of the compact set%
\begin{align*}
  A_T(\bar{u},\bar{w}) &:= \Bigl\{ x\in\R\ :\ \frac{1}{T}\# \{ t\in [0;T-1] :\\
	&\qquad |\varphi(t,x,\bar{u},\bar{w})| \leq b \} \geq 1 - r^* \Bigr\},%
\end{align*}
i.e., the midpoint of $[\min A_T(\bar{u},\bar{w}),\max A_T(\bar{u},\bar{w})]$. To derive an estimate for the diameter of $A_T(\bar{u},\bar{w})$, let $x_1,x_2 \in A_T(\bar{u},\bar{w})$ be chosen arbitrarily. We claim that there exists a time $t_*$ with
$\lceil (1 - 3r^*)T \rceil \leq t_* \leq T-1$ such that%
\begin{equation*}
  \varphi(t_*,x_i,\bar{u},\bar{w}) \in [-b,b],\quad i = 1,2.%
\end{equation*}
Indeed, if this was not the case, then the number of $t$'s in the interval $[\lceil (1 - 3r^*)T \rceil;T-1]$ with $\varphi(t,x_i,\bar{u},\bar{w}) \in [-b,b]$ for each $i=1,2$ can be at most half of the cardinality of this interval, implying that the total number of $t$'s in $[0;T-1]$ such that $\varphi(t,x_i,\bar{u},\bar{w}) \in [-b,b]$ is bounded by%
\begin{align*}
&  \lceil (1 - 3r^*)T \rceil + \frac{1}{2}(T - \lceil (1 - 3r^*)T \rceil)\\
& \leq \frac{1}{2}((1 - 3r^*)T + 1) + \frac{1}{2}T\\
	&= \frac{1}{2} + \left(1 - \frac{3}{2}r^*\right)T < (1 - r^*)T,%
\end{align*}
for $T$ large enough, a contradiction. We thus obtain%
\begin{equation*}
  |x_1 - x_2| \leq \frac{2b}{c^{t_*}} \leq \frac{2b}{c^{(1-3r^*)T}},%
\end{equation*}
implying%
\begin{equation}\label{eq_midpoint}
  |x - \hat{x}_0(T,\bar{u},\bar{w})| \leq \frac{b}{c^{(1-3r^*)T}}  \mbox{\quad for all\ } x \in A_T(\bar{u},\bar{w}).%
\end{equation}
Now the AMS property together with \eqref{eq_intervalmeas} implies%
\begin{equation}\label{eq_prob1}
  \limsup_{T\rightarrow\infty} P_{\mathrm{i},\mathrm{n},\mathrm{c}}\Bigl(\frac{1}{T}\sum_{t=0}^{T-1} \unit_{[-b,b]}(x_t) < 1 - r^* \Bigr) < \frac{\alpha}{r^*}.%
\end{equation}
Indeed, this follows by an application of Markov's inequality:%
\begin{align*}
 & P_{\mathrm{i},\mathrm{n},\mathrm{c}}\Bigl(\frac{1}{T}\sum_{t=0}^{T-1}\unit_{[-b,b]}(x_t) < 1 - r^* \Bigr)\\
&= P_{\mathrm{i},\mathrm{n},\mathrm{c}}\Bigl(\frac{1}{T}\sum_{t=0}^{T-1}\unit_{[-b,b]^c}(x_t) > r^* \Bigr)\\
	&\leq \frac{1}{r^*}E_{\mathrm{i},\mathrm{n},\mathrm{c}}\Bigl[\frac{1}{T}\sum_{t=0}^{T-1}\unit_{[-b,b]^c}(x_t)\Bigr] \stackrel{T \rightarrow \infty}{\longrightarrow} \frac{Q([-b,b]^c)}{r^*} < \frac{\alpha}{r^*}.%
\end{align*}
From \eqref{eq_midpoint} and the definition of $A_T(\bar{u},\bar{w})$ we conclude that%
\begin{align*}
  & P_{\mathrm{i},\mathrm{n},\mathrm{c}}\Bigl( \frac{1}{T}\sum_{t=0}^{T-1}\unit_{[-b,b]}(x_t) < 1 - r^* \Bigr)\\
	&\geq P_{\mathrm{i},\mathrm{n},\mathrm{c}}\Bigl( |x_0 - \hat{x}_0(T,\bar{u},\bar{w})| > \frac{b}{c^{(1-3r^*)T}}\Bigr)%
\end{align*}
and the left-hand side is smaller than $\alpha/r^*$ for large $T$ by \eqref{eq_prob1}. Our aim is to show that%
\begin{equation}\label{eq_671}
  \limsup_{T \rightarrow \infty}P_{\mathrm{i},\mathrm{n},\mathrm{c}}\Bigl( |x_0 - \hat{x}_0(T,\bar{u},\bar{w})| > \frac{b}{c^{(1-3r^*)T}} \Bigr) \geq \frac{\alpha}{r^*},%
\end{equation}
leading to a contradiction with \eqref{eq_prob1}.%

To this end, we will distinguish between two complementary cases. To classify these cases, we introduce the notion of a \emph{control rate} $R$ as follows.%

For each $T\geq1$, let $\UC_T$ be the set of all possible control sequences in $U^T$ the controller can generate under the given coding and control policy, i.e.,%
\begin{align*}
  \UC_T &:= \Bigl\{ \bigl(\gamma^c_0(q'_0),\gamma^c_1(q'_{[0,1]}),\ldots,\gamma^c_{T-1}(q'_{[0,T-1]})\bigr) \in U^T:\\
	&\qquad  q'_{[0,T-1]} \in (\MC')^T \Bigr\}.%
\end{align*}
We define the control rate by%
\begin{equation*}
  R := \limsup_{T \rightarrow \infty} \frac{1}{T}\log\#\UC_T.%
\end{equation*}
We now treat the two possible cases $R < (1 - 3r^*)\log c$ and $R \geq (1 - 3r^*)\log c$ separately.%

{\bf Case 1:} We fix a noise realization $\bar{w}_*$ and prove \eqref{eq_671} for the conditional probability of the corresponding event given $\bar{w} = \bar{w}_*$. To simplify notation, we write $A_T(\bar{u})$ and $\hat{x}_0(T,\bar{u})$ instead of $A_T(\bar{u},\bar{w}_*)$ and $\hat{x}_0(T,\bar{u},\bar{w}_*)$, respectively.%

Assume that $R < (1 - 3r^*)\log c$ \mbox{and pick} $\varepsilon>0$ so that $R + 2\varepsilon < (1 - 3r^*)\log c$. Put $\tilde{A}_T(\bar{u}) := [\min A_T(\bar{u}),\max A_T(\bar{u})]$ and note that $\#\UC_T \leq 2^{(R+\varepsilon)T}$ for all sufficiently large $T$. From \eqref{eq_midpoint} it follows that%
\begin{align*}
 & \limsup_{T \rightarrow \infty} m\bigg( \bigcup_{\bar{u} \in \UC_T} \tilde{A}_T(\bar{u}) \bigg) \leq \limsup_{T \rightarrow \infty}2^{(R+\varepsilon)T}\frac{2b}{c^{(1-3r^*)T}}\\
	&\quad \leq 2b \cdot \limsup_{T \rightarrow \infty} \frac{2^{-\varepsilon T} 2^{(1-3r^*)T\log c}}{c^{(1-3r^*)T}} = 2b \cdot \limsup_{T \rightarrow \infty}2^{-\varepsilon T} = 0.%
\end{align*}
Since $\pi_0 \ll m$, it follows that $\pi_0(\bigcup_{\bar{u} \in \UC_T}\tilde{A}_T(\bar{u})) \rightarrow 0$ as well and thus%
\begin{align*}
 & \limsup_{T \rightarrow \infty} P_{\mathrm{i},\mathrm{c}}\Bigl( |x_0 - \hat{x}_0(T,\bar{u})| \leq \frac{b}{c^{(1-3r^*)T}} \Bigr)\\
&\leq \limsup_{T \rightarrow \infty} P_{\mathrm{i}}\Bigl( x_0 \in \bigcup_{\bar{v} \in \UC_T} \tilde{A}_T(\bar{v}) \Bigr) \\
&= \limsup_{T \rightarrow \infty} \pi_0\Bigl(\bigcup_{\bar{v} \in \UC_T} \tilde{A}_T(\bar{v})\Bigr) = 0.%
\end{align*}
The inequality above holds, since $|x_0 - \hat{x}_0(T,\bar{u})| \leq \frac{b}{c^{(1-3r^*)T}}$ implies the existence of some $\bar{v} \in U^T$ with $x_0 \in \tilde{A}_T(\bar{v})$. Thus, \eqref{eq_671} holds, since%
\begin{equation*}
  \lim_{T \rightarrow \infty}P_{\mathrm{i},\mathrm{c}}\Bigl( |x_0 - \hat{x}_0(T,\bar{u})| > \frac{b}{c^{(1-3r^*)T}} \Bigr) = 1,%
\end{equation*}
independently of the noise realization $\bar{w}_*$.%

{\bf Case 2:} Assume that the control rate satisfies $R \geq (1 - 3r^*)\log c$ and%
\begin{equation*}
  \limsup_{T \rightarrow \infty}P_{\mathrm{i},\mathrm{n},\mathrm{c}}\left( |x_0 - \hat{x}_0(T,\bar{u})| > \frac{b}{c^{(1-3r^*)T}}\right) < \frac{\alpha}{r^*},%
\end{equation*}
contrary to \eqref{eq_671}. Fix a noise realization $\bar{w}_*$ so that%
\begin{equation}\label{eq_eqnCodSuccess}
  \limsup_{T \rightarrow \infty}P_{\mathrm{i},\mathrm{n},\mathrm{c}}\left( |x_0 - \hat{x}_0(T,\bar{u})| > \frac{b}{c^{(1-3r^*)T}}\Bigl| \bar{w} = \bar{w}_* \right) < \frac{\alpha}{r^*},%
\end{equation}
and drop the realization $\bar{w}_*$ in the notation, as in Case 1. Furthermore, write $P_{\mathrm{i},\mathrm{c}}(|x_0 - \hat{x}_0(T,\bar{u})| > \frac{b}{c^{(1-3r^*)T}})$ for the conditional probability above.%

The rest of Case 2 is subdivided into five steps.%

\emph{Step 1 (Construction of sets of bins)}: For every $T\geq1$, we define $\SC_T := \{\hat{x}_0(T,\bar{u}) : \bar{u} \in \UC_T\}$ and enumerate the elements of $\SC_T$ so that%
\begin{equation*}
  \SC_T = \left\{\bar{x}_1(T),\bar{x}_2(T),\ldots,\bar{x}_{n_1(T)}(T)\right\},%
\end{equation*}
where%
\begin{equation}\label{eq_n1_asymp}
  \limsup_{T \rightarrow \infty}\frac{1}{T}\log n_1(T) = R.%
\end{equation}
We define the following collection of bins:%
\begin{equation}\label{eq_binbdef}
  {\bf B}^T_i := \left\{x_0 \in \R: |x_0 - \bar{x}_i(T)| \leq \frac{b}{c^{(1-3r^*)T}}\right\}%
\end{equation}
for $i = 1,\ldots,n_1(T)$, which are not necessarily disjoint. Each ${\bf B}^T_i$ has the same Lebesgue measure, which we denote by $\rho_T := (2b)/(c^{(1-3r^*)T})$. From \eqref{eq_eqnCodSuccess} it follows that%
\begin{equation*}
  \liminf_{T \rightarrow \infty}P_{\mathrm{i},\mathrm{c}}\left( |x_0 - \hat{x}_0(T,\bar{u})| \leq \frac{b}{c^{(1-3r^*)T}}\right) > 1- \frac{\alpha}{r^*},%
\end{equation*}
for which it must be, by the analysis in Case 1, that%
\begin{equation}\label{eq_boundOnM}
  \liminf_{T \rightarrow \infty} \pi_0\Bigl( \bigcup_{i=1}^{n_1(T)} {\bf B}^T_i \Bigr) > 1 - \frac{\alpha}{r^*}.%
\end{equation}
We want to concentrate on the bins that are completely contained in $K = \mathrm{supp}(\pi_0)$. Since we assume that $K$ is an interval, the bins that are only partially contained in $K$ can contribute only very little measure as $T$ becomes large (their union can have at most twice the Lebesgue measure of a single bin), hence we can ignore them. Now assume that the number of bins that are completely outside of $K$ is $n(T)$, and for simplicity assume that these bins are always the last $n(T)$ bins in the enumeration ${\bf B}^T_1,\ldots,{\bf B}^T_{n_1(T)}$. For large $T$, this implies%
\begin{align*}
  &1 - \frac{\alpha}{r^*} \leq \pi_0\Bigl(\bigcup_{i=1}^{n_1(T)}{\bf B}^T_i\Bigr) = \pi_0\Bigl(K \cap \bigcup_{i=1}^{n_1(T)}{\bf B}^T_i\Bigr)\\
	&= \pi_0\Bigl(\bigcup_{i=1}^{n_1(T)-n(T)} {\bf B}^T_i\Bigr) \leq p_{\max} \cdot m\Bigl(\bigcup_{i=1}^{n_1(T)-n(T)} {\bf B}^T_i\Bigr)\\
	&\leq p_{\max} \cdot (n_1(T) - n(T)) \cdot \frac{2b}{c^{(1-3r^*)T}}.%
\end{align*}
Hence, $n_1(T) - n(T)$ must grow at an exponential rate of at least $(1-3r^*)\log c$, just as $n_1(T)$. We will thus, in the rest of the proof, assume w.l.o.g.~that all bins ${\bf B}^T_i$ are completely contained in $K$.%

\begin{figure}[h]
\begin{center}
\includegraphics[height=4.0cm,width=8.0cm]{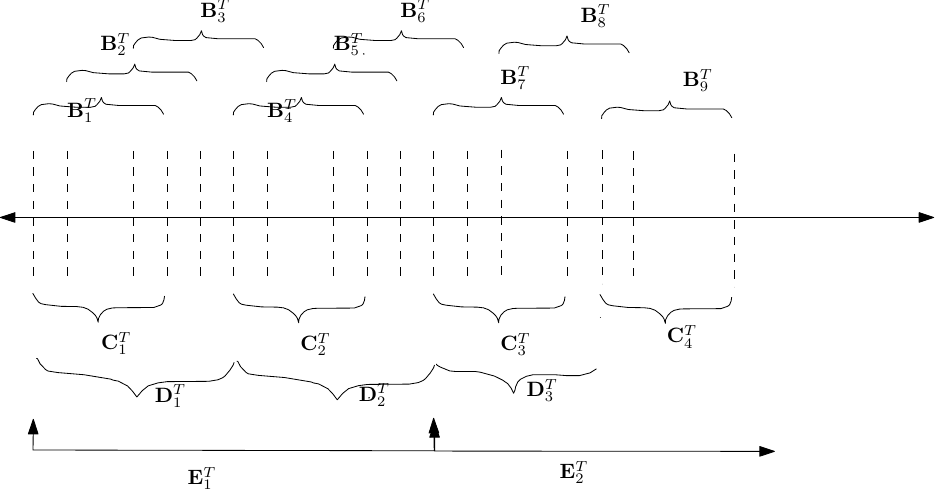}
\caption{Sample construction of bins, with $L=2$\label{Bins1}}
\end{center}
\end{figure}

Now, from $\{{\bf B}^T_i\}$ we extract a subcollection of disjoint bins $\{{\bf C}_i^T\}_{i=1}^{n_2(T)}$ via the construction in Lemma \ref{lem_intervals} (see Figure \ref{Bins1} for an example representation). In particular, we assume that the bins ${\bf B}_i^T$ are ordered according to the natural (non-decreasing) order of their left endpoints. This implies%
\begin{equation}\label{eq_cprops}
  \bigcup_{i=1}^{n_2(T)}{\bf C}^T_i \subset \bigcup_{i=1}^{n_1(T)} {\bf B}^T_i,\ m\Bigl(\bigcup_{i=1}^{n_2(T)}{\bf C}^T_i\Bigr) \geq \frac{1}{2}m\Bigl(\bigcup_{i=1}^{n_1(T)}{\bf B}^T_i\Bigr).%
\end{equation}
Furthermore, it must be that%
\begin{equation}\label{eq_n2_asymp}
  \limsup_{T \rightarrow \infty}\frac{1}{T}\log n_2(T) \geq (1 - 3r^*)\log c,%
\end{equation}
for otherwise, by the analysis in Case 1, $m(\bigcup_{i=1}^{n_2(T)}{\bf C}^T_i) \rightarrow 0$ in contradiction to \eqref{eq_boundOnM} and \eqref{eq_cprops}. Now, using the definition \eqref{eq_leftover} of the leftover set, we define a collection of $n_2(T)$ sets%
\begin{align*}
  {\bf D}^T_k := {\bf C}^T_k \cup L(i_k,i_{k+1}),\quad {\bf D}^T_{n_2(T)} := {\bf C}^T_{n_2(T)}.%
\end{align*}
Hence, ${\bf D}^T_k \subset [\alpha_k,\alpha_{k+1})$, where $\alpha_k = \min {\bf C}^T_k$. The sets ${\bf D}^T_k$ are thus pairwise disjoint. Also observe that%
\begin{equation}\label{eq_dlm}
  m({\bf D}^T_k \backslash {\bf C}^T_k) \leq m({\bf C}^T_k) = \rho_T,%
\end{equation}
since the leftover set has at most the Lebesgue measure of one bin. Finally, for a fixed $L \in \N$, group each collection of $L$ successive ${\bf D}^T_k$ bins as%
\begin{equation*}
  {\bf E}^T_n := \bigcup_{k=(n-1)L+1}^{nL} {\bf D}^T_k,\ n = 1,2,\ldots,\left\lfloor\frac{n_2(T)}{L}\right\rfloor + 1 =: n_3(T).%
\end{equation*}
(In the definition of the last bin ${\bf E}^T_{n_3(T)}$, we add some empty sets to the collection $\{{\bf D}^T_k\}$). From \eqref{eq_n2_asymp} it follows that the number of these bins also satisfies%
\begin{equation}\label{eq_n3_asymp}
  \limsup_{T \rightarrow \infty}\frac{1}{T}\log_2 n_3(T) \geq (1 -3r^*)\log c.%
\end{equation}
Also observe that%
\begin{equation}\label{eq_elm}
  m({\bf E}_n^T) \geq L\rho_T.%
\end{equation}
Let%
\begin{equation*}
  M_T := \bigcup_{i=1}^{n_1(T)} {\bf B}^T_i,\quad
  \overline{M}_T := \bigcup_{i=1}^{n_3(T)} {\bf E}^T_i  \setminus \left({\bf D}^T_{iL} \setminus {\bf C}^T_{iL}\right)%
\end{equation*}
and observe that%
\begin{equation}\label{eq_mtlm}
   m(M_T) \leq 2 n_2(T) \rho_T \leq 2n_3(T) L \rho_T.%
\end{equation}

\emph{Step 2 (The auxiliary coding scheme)}: We now construct an auxiliary coding scheme (in a traditional information-theoretic sense) as follows: We use the received channel output/control sequence to reconstruct the index $\Upsilon$ of the bin ${\bf E}^T_{\Upsilon}$ containing $x_0$ by looking at the points $\hat{x}_0(T,\bar{u})$. With $\hat{\Upsilon}$ denoting the estimate of $\Upsilon$ at the decoder, in the following we study $P(\hat{\Upsilon} \neq \Upsilon)$. By construction of the bins, if%
\begin{equation*}
  x_0 \in \overline{M}_T \wedge |x_0 - \hat{x}_0(T,\bar{u})| \leq \frac{b}{c^{(1-3r^*)T}},%
\end{equation*}
there is no ambiguity, hence $\Upsilon$ can be reconstructed and $\hat{\Upsilon} = \Upsilon$ (no error).%

On the other hand, if $x_0 \in M_T \setminus \overline{M}_T$, we have the following analysis: For every $x_0 \in M_T \setminus \overline{M}_T$, there is $k \geq 1$ so that $x_0 \in {\bf D}^T_{kL} \setminus {\bf C}^T_{kL}$ and hence, given the event $|x_0 - \hat{x}_0(T,\bar{u})| \leq \frac{b}{c^{(1-3r^*)T}}$, $x_0 \in {\bf D}^T_{kL} \setminus {\bf C}^T_{kL}$, the correct bin could be either ${\bf E}^T_k$ or ${\bf E}^T_{k+1}$. So, we can randomly and independently assign the channel output/control to either $\Upsilon=k$ or $\Upsilon=k+1$. The associated error probability is at most $1/2$ when the events $|x_0 - \hat{x}_0(T,\bar{u})| \leq \frac{b}{c^{(1-3r^*)T}}$ and $x_0 \in {\bf D}^T_{kL} \setminus {\bf C}^T_{kL}$ hold, i.e.,%
\begin{equation*}
  P_{\mathrm{i},\mathrm{c}}\Bigl(\hat{\Upsilon} \neq \Upsilon|x_0 \in M_T \backslash \overline{M}_T \wedge |x_0 - \hat{x}_0(T,\bar{u})| \leq \frac{b}{c^{(1-3r^*)T}}\Bigr) \leq \frac{1}{2}.%
\end{equation*}
Altogether, the error probability in our coding scheme can be estimated as follows:%
\begin{align*}
  & P(\hat{\Upsilon} \neq \Upsilon) \leq P_{\mathrm{i},\mathrm{c}}\left(|x_0 - \hat{x}_0(T,\bar{u})| > \frac{b}{c^{(1-3r^*)T}}\right)\\
	& + P_{\mathrm{i},\mathrm{c}}\left(\hat{\Upsilon} \neq \Upsilon||x_0 - \hat{x}_0(T,\bar{u})| \leq \frac{b}{c^{(1-3r^*)T}} \wedge x_0 \in M_T \backslash \overline{M}_T\right)\\
	& \quad \times P_{\mathrm{i},\mathrm{c}}\left(|x_0 - \hat{x}_0(T,\bar{u})| \leq \frac{b}{c^{(1-3r^*)T}} \wedge x_0 \in M_T \backslash \overline{M}_T\right)\\
	& \leq P_{\mathrm{i},\mathrm{c}}\left(|x_0 - \hat{x}_0(T,\bar{u})| > \frac{b}{c^{(1-3r^*)T}}\right) + \frac{1}{2}\pi_0(M_T \backslash \overline{M}_T).%
\end{align*}
From \eqref{eq_eqnCodSuccess} it follows that for all large enough $T$:%
\begin{equation}\label{eq_error_prob_bound}
  P(\hat{\Upsilon} \neq \Upsilon) \leq \frac{\alpha}{r^*} + \frac{1}{2}\pi_0(M_T \backslash \overline{M}_T).%
\end{equation}
Combining \eqref{eq_dlm} and \eqref{eq_elm}, we obtain%
\begin{align}\label{eq_pi0sde}
\begin{split}
  \pi_0({\bf D}^T_{iL} \backslash {\bf C}^T_{iL}) &\leq p_{\max} m({\bf D}^T_{iL} \backslash {\bf C}^T_{iL})\\
	&\leq \frac{p_{\max}}{L}m({\bf E}_i^T) \leq \frac{1}{L} \frac{p_{\max}}{p_{\min}} \pi_0({\bf E}_i^T).%
\end{split}
\end{align}
Since $\overline{M}_T \subset M_T$, the union in the definition of $\overline{M}_T$ is a disjoint union and the union of all ${\bf E}^T_i$ equals $M_T$, we have%
\begin{align*}
  &\pi_0(M_T \backslash \overline{M}_T) = \pi_0(M_T) - \pi_0(\overline{M}_T)\\
	                                     &= \pi_0(M_T) - \sum_i \pi_0\left({\bf E}^T_i  \setminus \left({\bf D}^T_{iL} \setminus {\bf C}^T_{iL}\right)\right)\\
																			 &= \pi_0(M_T) - \sum_i \left(\pi_0({\bf E}^T_i) - \pi_0({\bf D}^T_{iL} \setminus {\bf C}^T_{iL})\right)\\		
												&\stackrel{\eqref{eq_pi0sde}}{\leq} 
									\pi_0(M_T) - \sum_i \Bigl(\pi_0({\bf E}^T_i) - \frac{1}{L} \frac{p_{\max}}{p_{\min}} \pi_0({\bf E}_i^T)\Bigr)\\
									&= \pi_0(M_T) - \Bigl(1 - \frac{1}{L}\frac{p_{\max}}{p_{\min}}\Bigr) \sum_i \pi_0({\bf E}_i^T)
									= \frac{1}{L}\frac{p_{\max}}{p_{\min}} \pi_0(M_T).%
\end{align*}
Together with \eqref{eq_error_prob_bound}, we thus obtain%
\begin{equation}\label{eq_error_prob_est}
  \sum_{i=1}^{n_3(T)} P_{\mathrm{i}}(\Upsilon = i) P_{\mathrm{i},\mathrm{c}}(\hat{\Upsilon} \neq \Upsilon | \Upsilon = i) \leq \frac{\alpha}{r^*} + \frac{1}{2L}\frac{p_{\max}}{p_{\min}} \pi_0(M_T).%
\end{equation}

\emph{Step 3 (Introduction of an auxiliary source variable with uniform distribution)}: From \eqref{eq_mtlm} it follows that%
\begin{equation*}
  \pi_0(M_T) \leq p_{\max} m(M_T) \leq 2n_3(T)p_{\max}L\rho_T.%
\end{equation*}
Since clearly $\pi_0({\bf E}^T_i) \geq p_{\min}\frac{L\rho_T}{\pi_0(M_T)}\pi_0(M_T)$, we obtain%
\begin{equation*}
  P_{\mathrm{i}}(\Upsilon = i) = \pi_0({\bf E}^T_i) \geq \frac{1}{n_3(T)}\frac{p_{\min}}{2p_{\max}}\pi_0(M_T).%
\end{equation*}
Combining this with \eqref{eq_error_prob_est} leads to%
\begin{equation}\label{eq_fest}
  \sum_{i=1}^{n_3(T)}\frac{1}{n_3(T)}P_{\mathrm{i},\mathrm{c}}(\hat{\Upsilon} \neq \Upsilon | \Upsilon = i) \leq \frac{\frac{\alpha}{r^*} + \frac{1}{2L} \frac{p_{\max}}{2p_{\min}}\pi_0(M_T)}{\frac{p_{\min}}{2p_{\max}}\pi_0(M_T)}.%
\end{equation}
Let $W$ be an auxiliary random variable on $\{1,\ldots,n_3(T)\}$ with uniform distribution. Then we have%
\begin{align*}
  &P(\hat{\Upsilon} = W|W = i) = \sum_k P(\hat{\Upsilon} = W \wedge \Upsilon = k|W = i)\\
	&\geq P(\hat{\Upsilon} = W \wedge \Upsilon = i|W = i)	= P(\hat{\Upsilon} = W \wedge \Upsilon = W|W = i)\\
	&= P(\hat{\Upsilon} = W|\Upsilon = W \wedge W = i) P(\Upsilon = W|W = i)\\
	&= P(\hat{\Upsilon} = \Upsilon | \Upsilon = W \wedge W = i) P(\Upsilon = W| W = i)\\
	&= P(\hat{\Upsilon} = \Upsilon | \Upsilon = i) P(\Upsilon = W| W = i).%
\end{align*}
Considering the complementary events, we obtain%
\begin{align*}
  &P(\hat{\Upsilon} \neq W|W = i)\\
	&\leq 1 - (1 - P(\hat{\Upsilon} \neq \Upsilon|\Upsilon = i))(1 - P(\Upsilon \neq W|W = i))\\
	&\leq P(\Upsilon \neq W|W = i) + P(\hat{\Upsilon} \neq \Upsilon|\Upsilon = i).%
\end{align*}
Combining this with \eqref{eq_fest} leads to%
\begin{align}\label{eq_uniform_errorbound}
\begin{split}
  &\sum_{i=1}^{n_3(T)}P(W = i)P(\hat{\Upsilon} \neq W|W = i) \\
	&\leq P(\Upsilon \neq W) + \frac{\frac{\alpha}{r^*} + \frac{1}{2L} \frac{p_{\max}}{2p_{\min}}\pi_0(M_T)}{\frac{p_{\min}}{2p_{\max}}\pi_0(M_T)}.%
\end{split}
\end{align}

\emph{Step 4 (Application of optimal transport theory and coupling of the uniform source with the distribution of $\{{\bf E}^T_i\}$)}: The information-theoretic formulation of information transmission assumes that the messages to be transmitted are uniformly distributed. In the final step of our analysis, we relate the messages represented by the indices of the ${\bf E}^T_i$'s with their induced distribution under $\pi_0$ to a uniformly distributed set of messages: Let $P$ be the distribution of the indices of the ${\bf E}^T_i$'s under $\pi_0$ and $P'$ the uniform distribution of $W$, with the same cardinality as the set of ${\bf E}^T_i$'s. There exists a coupling between $P$ and $P'$ so that the expected error is lower bounded by the total variation distance between $P$ and $P'$; by finding a coupling (cf.~\cite[Eq.~(6.11)]{Villani}), we can achieve that%
\begin{align*}
  \beta &:= P(\Upsilon \neq W)\\
	&= \frac{1}{2}\sum_{i=1}^{n_3(T)} |P(i) - P'(i)| = 1 - \sum_{i=1}^{n_3(T)} \min\left\{P(i),P'(i) \right\}.%
\end{align*}
Let us estimate $\beta$. For sufficiently large $T$, we have%
\begin{align*}
  P(i) &= \pi_0({\bf E}^T_i) \geq p_{\min}m({\bf E}^T_i) = p_{\min}\left[n_3(T)m({\bf E}^T_i)\right]\frac{1}{n_3(T)}\\
	&\geq p_{\min} n_2(T)\rho_T \frac{1}{n_3(T)} \stackrel{\eqref{eq_cprops}}{\geq} p_{\min} \frac{1}{2}m(M_T) \frac{1}{n_3(T)}\\
	&\geq \frac{1}{2}\frac{p_{\min}}{p_{\max}}\pi_0(M_T)\frac{1}{n_3(T)} \stackrel{\eqref{eq_boundOnM}}{\geq} \frac{1}{2}\frac{p_{\min}}{p_{\max}}\left(1 - \frac{\alpha}{r^*}\right) \frac{1}{n_3(T)}.%
\end{align*}
Since $P'(i) = \frac{1}{n_3(T)}$, this implies%
\begin{equation*}
  \beta \leq 1 - \frac{1}{2}\frac{p_{\min}}{p_{\max}}\left(1 - \frac{\alpha}{r^*}\right).%
\end{equation*}

\emph{Step 5 (Application of the strong converse)}: In view of all of the above steps, the proposed coding scheme can be used to encode an auxiliary equi-distributed random variable with an asymptotic average probability of error upper bounded by%
\begin{align*}
& \beta + \limsup_{T \rightarrow \infty}\frac{\frac{\alpha}{r^*} + \frac{1}{2L}\frac{p_{\max}}{2p_{\min} }\pi_0(M_T)}{\frac{p_{\min}}{2 p_{\max}}\pi_0(M_T)}\\
& = \beta + \frac{1}{2L}\frac{p_{\max}^2}{p_{\min}^2} + 2\frac{\alpha}{r^*} \frac{p_{\max}}{p_{\min}} \cdot \limsup_{T \rightarrow \infty} \frac{1}{\pi_0(M_T)}\\
&\stackrel{\eqref{eq_boundOnM}}{\leq} \beta + \frac{1}{2L}\frac{p_{\max}^2}{p_{\min}^2} + 2\frac{\alpha}{r^*} \frac{p_{\max}}{p_{\min}} \cdot \frac{1}{1 - \frac{\alpha}{r^*}}\\
& = \beta + \frac{1}{2L}\frac{p_{\max}^2}{p_{\min}^2} + 2\frac{p_{\max}}{p_{\min}} \cdot \frac{\alpha}{r^* - \alpha}\\
&\leq 1 - \frac{1}{2}\frac{p_{\min}}{p_{\max}}\frac{r^* - \alpha}{r^*} + \frac{1}{2L}\frac{p_{\max}^2}{p_{\min}^2} + 2\frac{p_{\max}}{p_{\min}} \cdot \frac{\alpha}{r^* - \alpha}.%
\end{align*}
This error bound can be made strictly smaller than $1$, when $L$ is chosen sufficiently large and $\alpha$ sufficiently small. Thus, we arrive at a contradiction with the strong converse \eqref{eq_StrongConverse}, because the rate of our coding scheme satisfies%
\begin{equation*}
  \limsup_{T \rightarrow \infty}\frac{1}{T}\log n_3(T) \stackrel {\eqref{eq_n3_asymp}}{\geq} (1 - 3r^*)\log c \stackrel{\eqref{eq_contrad_impl}}{>} C.%
\end{equation*}
The proof is complete.%
\end{IEEEproof}
We note the following variation where the initial measure may have non-compact support with a proof sketch.%

\begin{theorem}\label{thm_noisychannelv2}
Consider system \eqref{eq_scalarsys} satisfying \eqref{eq_expanUnif}. Assume that $\pi_0 \ll m$ with $p$ denoting the density with respect to $m$, and that for every $\epsilon > 0$, there exists a compact interval $K_{\epsilon}$ such that, $\pi_0(K_{\epsilon}) \geq 1 - \epsilon$ and with
\begin{equation*}
  p^K_{\min} := \essinf_{x \in K}p(x) > 0, \quad \quad  p^K_{\max} := \esssup_{x \in K}p(x) < \infty,%
\end{equation*}
the following assumption holds:%
\begin{equation}\label{decayR}
  \lim_{\epsilon \to 0} \frac{\int_{\R \backslash K_{\epsilon}} p(x) \rmd x}{p^{K_{\epsilon}}_{\min} } = 0.%
\end{equation}
Then, if the AMS property is achieved via a causal coding and control strategy over a DMC of capacity $C$, we have%
\begin{equation*}
  C \geq \inf_{x\in\R}\log|f'(x)|.%
\end{equation*}
\end{theorem}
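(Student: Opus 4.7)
The plan is to prove the theorem by adapting the proof of Theorem \ref{thm_noisychannel} via a truncation argument in which the compact interval $K_\epsilon$ plays the role of $K = \supp(\pi_0)$ from the previous theorem, with the mass $\pi_0(\R \setminus K_\epsilon) \leq \epsilon$ absorbed into the error budget of the auxiliary information-theoretic coding scheme. The decay hypothesis \eqref{decayR} will be precisely the ingredient that allows this extra error to be driven to zero in the final bound.

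I would proceed by contradiction, assuming $C < \log c$, and retain the parameters $r^*, \alpha, b$ exactly as in the original proof, now introducing a further small parameter $\epsilon$ and the corresponding interval $K_\epsilon$. Case 1 of the original proof ($R < (1-3r^*)\log c$) goes through verbatim, since its argument uses only $\pi_0 \ll m$. In Case 2, the bin constructions of Step 1 would be restricted to bins ${\bf B}^T_i$ completely contained in $K_\epsilon$: the at most two bins straddling $\partial K_\epsilon$ contribute negligible $\pi_0$-mass as $T \to \infty$ by $\pi_0 \ll m$, and the bins entirely outside $K_\epsilon$ contribute at most $\epsilon$ in total $\pi_0$-mass. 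Because $p^{K_\epsilon}_{\max} < \infty$, the exponential growth estimate \eqref{eq_n2_asymp} for the number of surviving bins still holds. Steps 2--4 then proceed with $p^{K_\epsilon}_{\min}, p^{K_\epsilon}_{\max}$ in place of $p_{\min}, p_{\max}$, with one extra additive contribution of order $\epsilon$ to the error probability in \eqref{eq_error_prob_bound}, reflecting the event $\{x_0 \notin K_\epsilon\}$ on which the bin index $\Upsilon$ is undefined. The coupling argument of Step 4 then produces the bound $\beta \leq 1 - \tfrac{1}{2}(p^{K_\epsilon}_{\min}/p^{K_\epsilon}_{\max})(1 - \alpha/r^* - \epsilon)$, and the final total error bound analogous to Step 5 takes the form
\begin{equation*}
  1 - \tfrac{1}{2}\tfrac{p^{K_\epsilon}_{\min}}{p^{K_\epsilon}_{\max}}\bigl(1 - \tfrac{\alpha}{r^*} - \epsilon\bigr) + \tfrac{1}{2L}\Bigl(\tfrac{p^{K_\epsilon}_{\max}}{p^{K_\epsilon}_{\min}}\Bigr)^2 + 2\tfrac{p^{K_\epsilon}_{\max}}{p^{K_\epsilon}_{\min}}\tfrac{\alpha}{r^* - \alpha} + C_1 \cdot \tfrac{\int_{\R \setminus K_\epsilon} p(x)\, \rmd x}{p^{K_\epsilon}_{\min}}
\end{equation*}
for some constant $C_1>0$ (the last term comes from dividing the extra $\epsilon$ contribution by $\pi_0(M_T) \geq \tfrac{1}{2}p^{K_\epsilon}_{\min} \cdot L\rho_T \cdot n_3(T)$ and simplifying). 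Choosing first $\epsilon$ small enough that \eqref{decayR} makes the final term as small as desired, then $L$ large enough and $\alpha$ small enough (both depending on $K_\epsilon$) to dominate the remaining terms, this bound is driven strictly below $1$, producing the required contradiction with the strong converse \eqref{eq_StrongConverse}.

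The main obstacle is the honest bookkeeping of the additive $O(\epsilon)$ error terms throughout the adapted argument and the verification that, after division by $\pi_0(M_T)$ (which is essentially lower-bounded by a multiple of $p^{K_\epsilon}_{\min}$), the resulting penalty is exactly of the form $\int_{\R \setminus K_\epsilon}p\,\rmd x \big/ p^{K_\epsilon}_{\min}$, so that hypothesis \eqref{decayR} is usable. A small consistency point worth checking is that the choices of $L$ and $\alpha$ can be postponed until after $K_\epsilon$ is fixed, so that no circular dependence arises among $L$, $\alpha$, $\epsilon$, and the density bounds; once this is arranged the remainder of the argument is a direct transcription of the proof of Theorem \ref{thm_noisychannel}.
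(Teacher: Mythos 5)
Your proposal matches the paper's proof sketch for this theorem: Case 1 is kept unchanged, the bin construction in Case 2 is restricted to $K_\epsilon$ by working with the restricted measure $\pi_0^{K_\epsilon}(B):=\pi_0(B\cap K_\epsilon)$, the leftover mass $\pi_0(\R\setminus K_\epsilon)\le\epsilon$ is carried as an additive error with $p^{K_\epsilon}_{\min}$, $p^{K_\epsilon}_{\max}$ replacing $p_{\min}$, $p_{\max}$, and \eqref{decayR} is invoked to kill the resulting extra term in Step 5. One small refinement: your $C_1$ is not an absolute constant but carries a factor proportional to $p^{K_\epsilon}_{\max}$, so for the step ``choose $\epsilon$ first'' to be legitimate one needs $p^{K_\epsilon}_{\max}$ to stay bounded as $\epsilon\to0$ --- the paper states this explicitly (``Since $p^{K_\epsilon}_{\max}$ is uniformly bounded under the given assumptions'') just before applying \eqref{decayR}.
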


\begin{remark}
A sufficient condition for \eqref{decayR} is that $p$ is differentiable, positive everywhere and monotone decreasing in either direction as $|x|$ increases for sufficiently large values of $|x|$, and $\lim_{|x| \to \infty}p'(x)/p(x) = \infty$. This follows from an application of L'Hospital's theorem to the expression%
\begin{equation*}
  \lim_{x \to \infty} \frac{\int_{|s| > x} p(s) \rmd s}{\min(p(x),p(-x))}.%
\end{equation*}
Probability densities which decay faster than an exponential (such as the Gaussian) satisfy this condition. An exponential density (if one-sided, the denominator will just be $p(x)$) keeps this ratio a constant as $|x|$ increases and densities with a heavier tail than an exponential do not satisfy this condition.
\end{remark}

\begin{IEEEproof}
The proof follows almost identically as that of Theorem \ref{thm_noisychannel}: Case 1 follows identically. For Case 2, in the following, fix a sufficiently small $\epsilon$ and a corresponding $K_{\epsilon}$. If \eqref{eq_671} does not hold, then we can instead of \eqref{eq_eqnCodSuccess}, consider%
\begin{align*}
 & \limsup_{T \rightarrow \infty}P_{\mathrm{i},\mathrm{n},\mathrm{c}}\Bigl(x_0 \in K_{\epsilon}, |x_0 - \hat{x}_0(T,\bar{u})|\\
	&\qquad \qquad \qquad \qquad > \frac{b}{c^{(1-3r^*)T}}\Bigl| \bar{w} = \bar{w}_* \Bigr) < \frac{\alpha}{r^*}.%
\end{align*}
We will construct the auxiliary coding scheme by embedding the bins inside $K_{\epsilon}$. We will thus focus on the sub-probability measure defined by the restriction of $\pi_0$ to $K_{\epsilon}$, defined formally as $\pi^{K_{\epsilon}}_0(B):= \pi_0(B \cap K_{\epsilon})$ for every Borel $B$, and thus we replace \eqref{eq_boundOnM} with%
\begin{equation*}
  \liminf_{T \rightarrow \infty} \pi^{K_{\epsilon}}_0\Bigl( \bigcup_{i=1}^{n_1(T)} {\bf B}^T_i \Bigr) > 1 - \frac{\alpha}{r^*} - \epsilon.%
\end{equation*}
The analysis will go through all the way until in Step 5, where the following term needs to be made less than 1:%
\begin{equation*}
  \beta + \limsup_{T \rightarrow \infty}\frac{\frac{\alpha}{r^*} + \epsilon + \frac{1}{2L}\frac{p^{K_{\epsilon}}_{\max}}{2p^{K_{\epsilon}}_{\min} }\pi^{K_{\epsilon}}_0(M_T)}{\frac{p^{K_{\epsilon}}_{\min}}{2 p^{K_{\epsilon}}_{\max}}\pi^{K_{\epsilon}}_0(M_T)}.%
\end{equation*}
The only additional term, when compared with Step 5 of the proof of Theorem \ref{thm_noisychannel}, is the expression $2 \epsilon p^{K_{\epsilon}}_{\max}/(p^{K_{\epsilon}}_{\min}(1 - \frac{\alpha}{r^*} - \epsilon))$. Since $p^{K_{\epsilon}}_{\max}$ is uniformly bounded under the given assumptions, condition \eqref{decayR} ensures that this term can be made arbitrarily small as $\epsilon$ is made small.%
\end{IEEEproof}

\section{Discussion and concluding remarks}%

In this paper, we considered a stochastic stabilization problem for a general controlled stochastic system over a communication channel. For this problem, we developed a new approach derive fundamental lower bounds on information transmission requirements for control over communication channels. These lower bounds are consistent with the bounds obtained earlier via information-theoretic methods and those obtained for more restrictive models (including linear systems). Moreover, the new proofs are more direct and concise and they allow to obtain finer lower bounds for a large class of systems. The lower bounds obtained for the AMS property are expressed in terms of the determinant of the Jacobian of the nonlinear system model and these recover the existing results for the linear system setup as a special case. For noisy channels, our approach has been to develop a method to relate stabilization entropy and channel capacity through a generalization of the strong converse of information theory.%

Achievability results have been obtained for linear systems in \cite{YukselAMSITArxiv,YukselBasarBook} and for nonlinear systems in \cite{yuksel2016stability}. In particular, \cite[Thm.~4.2]{YukselAMSITArxiv} shows that for a linear system with a diagonalizable matrix $A$, controlled over a DMC, the AMS property can be achieved whenever the channel capacity exceeds the log-sum of the unstable eigenvalues. Hence, in this case the lower bounds following from the results in this paper match with the upper bound. For nonlinear systems of the form%
\begin{equation*}
  x_{t+1} = f(x_t,u_t) + w_t%
\end{equation*}
with $f(\cdot,u):\R^N \rightarrow \R^N$ invertible and $C^1$ for every $u$ and $\{w_t\}$ an i.i.d.~sequence of zero-mean Gaussian variables, it is shown in \cite[Thm.~5.1]{yuksel2016stability} that ergodicity (and thus AMS) can be achieved over a over a discrete noiseless channel under the following assumption: There exist a function $\kappa:\R^N \rightarrow \R^M$ with $\kappa(0)=0$ and a constant $a>0$ such that $|f(x,\kappa(z))|_{\infty} \leq a |x - z|_{\infty}$ for all $x,z\in\R^N$. In this case, the minimal required channel capacity $C_0$ satisfies $C_0 \leq N \log(a) + 1$.%

Finally, we want to mention that local exponential orbit complexity of the open-loop system (as opposed to the global unstable behavior imposed in the system models studied in Section \ref{sec_specmodels1}), in general, does not lead to a positive bound on the channel capacity. For instance, if a system of the form%
\begin{equation*}
  x_{t+1} = f(x_t) + u_t + w_t%
\end{equation*}
admits a compact uniformly hyperbolic set for the associated deterministic system $x_{t+1} = f(x_t)$ and the noise amplitude is sufficiently small, it is well-known that the uncontrolled noisy system $x_{t+1} = f(x_t) + w_t$ admits a random hyperbolic set supporting a stationary measure under mild assumptions, cf.~\cite{Liu} (see also the relevant classical theory of positive Harris recurrence \cite{MeynBook,YukMeynTAC2010}). Hence, for an appropriate initial measure $\pi_0$, the uncontrolled system is already AMS, implying that no information transmission at all is necessary.%

\appendix

\begin{lemma}\label{lem_largedev}
Let $\alpha,\beta,r \in (0,1)$ with $\alpha + \beta = 1$. Then%
\begin{align*}
  &\lim_{T \rightarrow \infty}\frac{1}{T}\log \sum_{t=\lceil (1-r)T \rceil}^T {T \choose t} \alpha^t \beta^{T-t} \\
	&= \left\{\begin{array}{rl} H(r) + r\log\beta + (1-r)\log\alpha & \mbox{if } \beta > r,\\
	0 & \mbox{if } \beta \leq r.
\end{array}\right.%
\end{align*}
As a consequence,%
\begin{equation}\label{eq_subset_rate}
  \lim_{T \rightarrow \infty}\frac{1}{T}\log \sum_{t=\lceil (1-r)T \rceil}^T {T \choose t} = H(r)\ \forall r \in \Bigl(0,\frac{1}{2}\Bigr).%
\end{equation}
\end{lemma}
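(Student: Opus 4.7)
The sum in question is the binomial tail $\P(X \geq \lceil(1-r)T\rceil)$ for $X \sim \mathrm{Bin}(T,\alpha)$, so the result is a standard Cram\'er-type large-deviation estimate. My plan is to prove it by combining the elementary information-theoretic Stirling bounds with a simple ``largest term dominates'' argument, and then to derive the consequence by specializing to $\alpha = \beta = 1/2$.

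First, define the continuous function
\begin{equation*}
  g(x) := H(x) + x\log\alpha + (1-x)\log\beta,\quad x\in[0,1].
\end{equation*}
A direct computation shows that $g$ is concave on $[0,1]$ and attains its maximum at $x = \alpha$, with $g(\alpha) = 0$. Using the well-known bounds $\frac{1}{T+1}2^{TH(t/T)} \leq \binom{T}{t} \leq 2^{TH(t/T)}$ (see \cite{Cover}), we get, uniformly in $t \in [0;T]$,
\begin{equation*}
  \frac{1}{T+1}\,2^{T g(t/T)} \;\leq\; \binom{T}{t}\alpha^t \beta^{T-t} \;\leq\; 2^{T g(t/T)}.
\end{equation*}

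Second, denote the sum by $S_T$ and note that it contains at most $rT+2$ terms. Therefore
\begin{equation*}
  \max_{t \in [\lceil(1-r)T\rceil;T]} \binom{T}{t}\alpha^t\beta^{T-t} \;\leq\; S_T \;\leq\; (rT+2)\max_{t \in [\lceil(1-r)T\rceil;T]}\binom{T}{t}\alpha^t\beta^{T-t}.
\end{equation*}
Taking $\frac{1}{T}\log$ and letting $T\rightarrow\infty$, the polynomial prefactors contribute $O(\log T/T) \rightarrow 0$, and the discretization error in replacing $t/T$ by a real variable vanishes as well by continuity of $g$. Hence
\begin{equation*}
  \lim_{T\to\infty}\frac{1}{T}\log S_T = \max_{x\in[1-r,\,1]} g(x).
\end{equation*}

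Third, I carry out the case analysis. If $\beta > r$, then $\alpha = 1-\beta < 1-r$, so the peak $x=\alpha$ lies to the left of the interval $[1-r,1]$; by concavity, $g$ is decreasing on $[1-r,1]$ and the maximum is attained at $x=1-r$. Using $H(1-r) = H(r)$, one obtains $g(1-r) = H(r) + (1-r)\log\alpha + r\log\beta$, matching the stated formula. If $\beta \leq r$, then $\alpha \geq 1-r$, so the unconstrained maximizer $\alpha \in [1-r,1)$ lies inside the interval and the maximum is $g(\alpha) = 0$.

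Finally, for the consequence \eqref{eq_subset_rate}, apply the first part with $\alpha = \beta = 1/2$ (so $\beta = 1/2 > r$ since $r < 1/2$); the formula gives $H(r) + r\log(1/2) + (1-r)\log(1/2) = H(r) - 1$. Since $\sum_t \binom{T}{t} = 2^T \sum_t \binom{T}{t}(1/2)^T$, one concludes $\frac{1}{T}\log\sum_{t\geq\lceil(1-r)T\rceil}\binom{T}{t} \to 1 + (H(r) - 1) = H(r)$.

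The only mildly delicate point is the uniform Stirling estimate near the right endpoint $t=T$ (where $H(t/T)=0$ but the crude bound $\binom{T}{T}\alpha^T = \alpha^T$ gives $\frac{1}{T}\log = \log\alpha$, consistent with $g(1) = \log\alpha$), but the stated two-sided bound from \cite{Cover} handles all integer $t\in[0;T]$ uniformly, so no separate boundary treatment is needed.
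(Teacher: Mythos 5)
Your proposal is correct, and it takes a genuinely different route from the paper. The paper invokes Sanov's theorem to identify the exponent as $-D(\bar{P}^*\|\bar{Q})$, where $\bar{P}^*$ is the information projection of the Bernoulli($\alpha$) law onto $\{P : P(1) \geq 1-r\}$, and then solves that constrained minimization by calculus. You instead prove the needed special case of Sanov from scratch: the method-of-types bounds $\frac{1}{T+1}2^{TH(t/T)} \leq \binom{T}{t} \leq 2^{TH(t/T)}$ plus the largest-term-dominates argument reduce the limit to $\max_{x\in[1-r,1]} g(x)$ with $g(x) = H(x) + x\log\alpha + (1-x)\log\beta$. Note that $g(x) = -D\bigl((1-x,x)\,\|\,(\beta,\alpha)\bigr)$, so your maximization is literally the same optimization the paper performs, and your concavity/endpoint case analysis mirrors the paper's discussion of where the minimizer of $D(\cdot\|\bar{Q})$ sits relative to the constraint. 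What your version buys is self-containedness (no appeal to Sanov, hence no need to worry about the regularity of the constraint set that the paper's citation implicitly requires for the matching lower bound), at the cost of a slightly longer argument; the paper's version is shorter given Sanov as a black box. Your handling of the discretization and of the boundary term $t=T$ is careful and correct, as is the derivation of \eqref{eq_subset_rate} from the case $\alpha=\beta=\tfrac12$, which coincides with the paper's.
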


\begin{IEEEproof}
Let $(X_t)_{t\geq0}$ be an i.i.d.~sequence of $\{0,1\}$-valued Bernoulli random variables with associated probability distribution $\bar{Q}(X_t=0) = \beta$, $\bar{Q}(X_t=1) = \alpha$. Then%
\begin{equation*}
  \sum_{t=\lceil (1-r)T \rceil}^T {T \choose t} \alpha^t \beta^{T-t} = P\Bigl(\frac{1}{T}\sum_{t=0}^{T-1} X_t \geq 1 - r\Bigr).%
\end{equation*}
Sanov's theorem (see \cite[Thm.~11.4.1]{Cover}) yields%
\begin{equation*}
  \lim_{T \rightarrow \infty}\frac{1}{T}\log P\Bigl(\frac{1}{T}\sum
_{t=0}^{T-1} X_t \geq 1 - r\Bigr) = -D(\bar{P}^*||\bar{Q}),%
\end{equation*}
where $\bar{P}^*$ is the information projection of $\bar{Q}$ onto $E := \{P : P(1) \geq 1 - r\}$, i.e., the distribution that minimizes%
\begin{equation*}
  D(P||\bar{Q}) = P(0)\log \frac{P(0)}{\beta} + P(1)\log \frac{P(1)}{\alpha}%
\end{equation*}
under the constraint $P(1) \geq 1 - r$. To determine the solution to this minimization problem, we define the function%
\begin{equation*}
  h(t) := t\log \frac{t}{\beta} + (1-t)\log \frac{1-t}{\alpha},\quad h:[0,1] \rightarrow \R,%
\end{equation*}
whose derivative $h'(t) = \log(\frac{\alpha}{\beta}\frac{t}{1-t})$ vanishes if and only if $t = \beta$. Computing the second derivative $h''(t) = (\ln(2)t(1-t))^{-1}$, we see that $h''(\beta) > 0$, hence $h$ has a minimum at $t = \beta$. Due to the constraint $P(1) \geq 1 - r$, this is only relevant if $\beta \leq r$. In this case, the minimizing distribution is $(\bar{P}^*(0),\bar{P}^*(1)) = (\beta,\alpha)$. Otherwise, the minimum is attained at $t = r$ (by monotonicity) and $(\bar{P}^*(0),\bar{P}^*(1)) = (r,1-r)$. This implies the first assertion of the lemma. The identity \eqref{eq_subset_rate} follows by considering $\alpha = \beta = \frac{1}{2}$.%
\end{IEEEproof}

\begin{lemma}\label{lem_intervals}
Let $\{I_1,\ldots,I_r\}$ be a finite collection of compact intervals, each of equal length $|I_i| = l$. Then there exists a pairwise disjoint subcollection $\{I_{i_1},\ldots,I_{i_k}\}$ satisfying%
\begin{equation*}
  m\Bigl( \bigcup_{j=1}^k I_{i_j} \Bigr) \geq \frac{1}{2}m\Bigl( \bigcup_{i=1}^r I_i \Bigr).%
\end{equation*}
\end{lemma}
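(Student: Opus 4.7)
The plan is to use a greedy-selection scheme of Vitali type, exploiting the fact that all intervals have the same length $l$. Sort $\{I_1,\ldots,I_r\}$ by their left endpoints so that, writing $I_i = [a_i,a_i + l]$, one has $a_1 \leq a_2 \leq \cdots \leq a_r$. Starting from $i_1 := 1$, recursively define $i_{m+1}$ as the smallest index $j > i_m$ with $a_j > a_{i_m} + l$, i.e., with $I_j \cap I_{i_m} = \emptyset$; terminate when no such $j$ exists, obtaining indices $i_1 < i_2 < \cdots < i_k$.

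First I would verify pairwise disjointness of $\{I_{i_1},\ldots,I_{i_k}\}$: since the left endpoints are non-decreasing and $a_{i_{m+1}} > a_{i_m} + l$, an easy induction gives $a_{i_{m+s}} > a_{i_m} + l$ for every $s \geq 1$, so $I_{i_m} \cap I_{i_{m+s}} = \emptyset$. The key step is the covering claim: for every index $j \in \{1,\ldots,r\}$, letting $m(j)$ be the largest $m$ with $i_m \leq j$, the interval $I_j$ is contained in the double-length window $[a_{i_{m(j)}},\,a_{i_{m(j)}} + 2l]$. When $j = i_{m(j)}$ this is trivial; otherwise minimality in the greedy step forces $I_j \cap I_{i_{m(j)}} \neq \emptyset$ (if $m(j) < k$ because $j < i_{m(j)+1}$ would have been a candidate for $i_{m(j)+1}$ had it been disjoint from $I_{i_{m(j)}}$, and if $m(j) = k$ because the algorithm terminated). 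Hence $a_j \leq a_{i_{m(j)}} + l$, and together with $a_j \geq a_{i_{m(j)}}$ this yields $I_j \subset [a_{i_{m(j)}},\,a_{i_{m(j)}} + 2l]$.

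With the covering claim in hand, the conclusion is immediate from subadditivity of Lebesgue measure:
\begin{equation*}
  m\Bigl(\bigcup_{i=1}^r I_i\Bigr) \leq \sum_{m=1}^k m\bigl([a_{i_m},\,a_{i_m} + 2l]\bigr) = 2kl = 2\,m\Bigl(\bigcup_{m=1}^k I_{i_m}\Bigr),
\end{equation*}
which rearranges to the desired inequality. The only delicate point is the justification that every unchosen interval really does meet some $I_{i_m}$; this rests entirely on the minimality built into the selection of $i_{m+1}$ and is the one step where the argument would collapse if the greedy procedure were phrased carelessly.
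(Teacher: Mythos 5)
Your proof is correct and takes essentially the same approach as the paper: both sort the intervals by left endpoint and greedily select the first interval not yet overlapping the previously chosen one. The only cosmetic difference is in the final accounting — you cover $\bigcup I_i$ by doubled windows $[a_{i_m},a_{i_m}+2l]$ and invoke subadditivity to get $2kl$, whereas the paper bounds the ``leftover'' measure between consecutive selections by $l$ each, yielding the marginally sharper $(2k-1)l$; both comfortably give the factor~$\tfrac12$.
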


\begin{IEEEproof}
We may assume that the intervals $I_i$ are ordered so that their left endpoints form a non-decreasing sequence. Then the indexes $i_j$ are determined as follows: Put $i_1 := 1$. Then take the next interval in $\{I_i\}_{i=2}^r$, which does not intersect $I_{i_1}$ and call it $I_{i_2}$. Let $I_j = [\alpha_j,\beta_j]$. The \emph{leftover space} $L(i_1,i_2)$ between $I_{i_1} = I_1$ and $I_{i_2}$ is%
\begin{equation}\label{eq_leftover}
  L(i_1,i_2) := (\beta_{i_1},\alpha_{i_2}) \cap \bigcup_{j=1}^r I_j%
\end{equation}
and has Lebesgue measure $\leq l$, for otherwise $I_{i_2}$ would not be the first interval not intersecting $I_{i_1}$. Continuing in this way, we find the desired collection of pairwise disjoint intervals and it follows that $m(\bigcup_{j=1}^k I_{i_j}) = kl$, while%
\begin{equation*}
  m\Bigl( \bigcup_{i=1}^r I_i \Bigr) - m\Bigl(\bigcup_{j=1}^k I_{i_j} \Bigr) = m\Bigl( \bigcup_{i=1}^r I_i \backslash \bigcup_{j=1}^k I_{i_j} \Bigr) \leq kl,%
\end{equation*}
implying $2m(\bigcup_{j=1}^k I_{i_j}) = 2kl \geq m\Bigl( \bigcup_{i=1}^r I_i \Bigr)$.
\end{IEEEproof}

\end{document}